\newtheorem{thm}{Theorem}[section]
\newtheorem{lem}[thm]{Lemma}
\theoremstyle{definition}
\newtheorem{defn}[thm]{Definition}
\theoremstyle{remark}
\newtheorem{rem}[thm]{Remark}
\numberwithin{equation}{section}
\newcommand{\To}{\longrightarrow}
\renewcommand\paragraph{\@startsection{paragraph}{4}{\z@}%
	{-2.5ex\@plus -1ex \@minus -.25ex}%
	{1.25ex \@plus .25ex}%
	{\normalfont\normalsize\centering\bfseries}}
\newcommand{\nocontentsline}[3]{}
\let\origcontentsline\addcontentsline
\newcommand\stoptoc{\let\addcontentsline\nocontentsline}
\newcommand\resumetoc{\let\addcontentsline\origcontentsline}
\begin{document}
	\title
	[Fujita exponent for fractional sub-Laplacian in the Heisenberg group]
	{Fujita exponent for the fractional sub-Laplace semilinear heat equation with forcing term on the Heisenberg group}
	
	\author[P.\,Oza and D.\,Suragan]
	{Priyank Oza and Durvudkhan Suragan}
	\address{Priyank\,Oza \hfill\break
		Department of Mathametics\\
		Nazarbayev University \newline
		53 Kabanbay Batyr Ave, 010000 Astana, Kazakhstan.}
	\email{priyank.kumar@nu.edu.kz, priyank.oza3@gmail.com}
	\address{Durvudkhan\,Suragan \hfill\break
		Department of Mathametics\\
		Nazarbayev University \newline
		53 Kabanbay Batyr Ave, 010000 Astana, Kazakhstan.}
	\email{durvudkhan.suragan@nu.edu.kz}
	\thanks{Submitted \today.  Published-----.}
	\subjclass[2020]{35A01, 35R03, 47G20, 35B33, 35K58, 35B40}
	\keywords{Fujita exponent, Heisenberg group, fractional Laplacian, global existence, finite time blow-up, integro-PDEs}
	\begin{abstract}
		We study the following semilinear heat equation in the Heisenberg group $\mathbbm{H}^N$:  
		\begin{align}	
			\begin{cases}
				\partial_tu+(-\Delta_{\mathbbm{H}^N})^su=|u|^p+f &\text{in } \,\mathbbm{H}^N\times (0,T),\\
				u(0,\cdot)=u_0(\cdot) &\text{in } \,\mathbbm{H}^N,
			\end{cases}	
		\end{align}
		where $(-\Delta_{\mathbbm{H}^N})^s$ denotes the fractional sub-Laplacian of order $s\in (0,1)$ on $\mathbbm{H}^N$. We establish that the Fujita exponent, a critical threshold that delimits different dynamical regimes of this equation, is $$p_F\coloneqq\frac{Q}{Q-2s},$$ where $Q\coloneqq 2N+2$ is the homogeneous dimension of $\mathbbm{H}^N$. We prove the existence of global-in-time solutions for the supercritical case $(p>p_F),$ and the non-existence of global-in-time solutions for the subcritical case $(1<p<p_F).$ For the critical case $p=p_F,$ we provide a class of functions for which $u$ blows up in finite time. These results extend the classical Fujita phenomenon to a sub-Riemannian setting with the nonlocal effects of the fractional sub-Laplacian. Our proof methods intertwine analytic techniques with the geometric structure of the Heisenberg group.
	\end{abstract}	
	
	\maketitle
	{\hypersetup{linkcolor=black}
		\tableofcontents}
	\section{Introduction}
	The study of the \textit{Fujita exponent} has gained significant attention over the years. This exponent represents a critical value that characterizes the time evolution of solutions to corresponding semilinear heat equations. The seminal work in this area was carried out by Fujita \cite{Fujita}, who investigated the following problem:
	\begin{align}\label{Fu}
		\begin{cases}
			\partial_tu-\Delta u=u^p &\text{ in }\mathbb{R}^N\times (0,T),\\
			u(\cdot,0)=u_0(\cdot) &\text{ in }\mathbb{R}^N,
		\end{cases}
	\end{align}
	for $p>1,$ $u_0\geq 0,$ and $T,$ which represents the maximal time of existence of the solution. Fujita proved the following results:
	\begin{enumerate}
		\item [(i)] There exists a global-in-time solution to \eqref{Fu} for $p> \frac{N+2}{N}.$
		\item [(ii)] All solutions to \eqref{Fu} blow up in finite time for $p\in\left(1,\frac{N+2}{N}\right).$
	\end{enumerate}
	Here $$\frac{N+2}{N}$$ is known as the Fujita exponent for problem \eqref{Fu}. Furthermore, Hayakawa \cite{Haya}, Sugitani \cite{Sugitani} and Kobayashi et al. \cite{Koba} addressed the case $p=\frac{N+2}{N}$, showing that there is no global solution in this case. Subsequently, Bandle et al. \cite{Bandle} examined the following semilinear heat equation with a forcing term: 
	\begin{align}\label{Band}
		\begin{cases}
			\partial_tu-\Delta u=|u|^p+f &\text{ in }\mathbb{R}^N\times (0,T),\\
			u(\cdot,0)=u_0(\cdot) &\text{ in }\mathbb{R}^N,
		\end{cases}
	\end{align}
	where $\int_{\mathbb{R}^N} f>0,$
	presenting a non-homogeneous version of \eqref{Fu}. They showed that the Fujita exponent for \eqref{Band} is given by $\frac{N}{N-2},\; N>2.$ 
	
	Following this, numerous studies have explored the Fujita phenomenon involving both local and nonlocal operators in the Euclidean setting. Although we highlight a few closely related works, this list is by no means exhaustive, as many other studies have made significant contributions to the field.
	For related results on local diffusion operators such as $p$-Laplacian and porous medium equations, readers are referred to the monographs \cite{Quit, Samar} and the survey by Levine \cite{Levine}. Further studies on the decay rate of solutions to equations of the form \eqref{Fu} can be found in \cite{Fila, Fila2, Fila3} and the references therein. Jleli et al. \cite{Jleli} generalized the problem \eqref{Fu} to an equation with forcing term of the form $t^\sigma f(x)$ for some constant $\sigma\in (-1,\infty)\setminus\{0\}.$ In this case, the critical exponent depends explicitly on $\sigma.$ Analogous results for the semilinear fractional heat equation in Euclidean space can be found in \cite{Guedda, Majd, Nagasa, XWang},  while \cite{Biagi, Del Pezzo} address on classes of mixed operators.
	
	Over the past decades, the study of partial differential equations in non-Euclidean settings has garnered significant attention, as it bridges analysis with geometry and mathematical physics.  In particular, partial differential equations in the Heisenberg group arise naturally in fields such as the study of magnetic trajectories on nilmanifolds \cite{ferro}, image segmentation models \cite{image}, non-Markovian coupling of Brownian motions \cite{non-markov}, quantum mechanics \cite{quantum}. In these contexts, sub‑Riemannian geometry plays a crucial and intrinsic role.

	A significant amount of research has been dedicated to studying $\Delta_\infty$ (infinity Laplace) equations and degenerate equations in non-Euclidean settings, particularly on the Heisenberg group, as well as more generally in sub-Riemannian and Carnot-Carath\'eodory spaces. Notably, Bieske \cite{Bieske} explored infinite harmonic functions in the Heisenberg group using viscosity solutions, while Wang \cite{Wang} established the uniqueness of viscosity solution to $\Delta_\infty$ equation on Carnot groups. Additionally, Wang \cite{Wang 2} studied the removable singularities of viscosity subsolutions to degenerate elliptic Pucci operators in the Heisenberg group. Ferrari and Vecchi \cite{Ferrai} demonstrated the H\"older regularity of uniformly continuous and bounded viscosity solutions of fully nonlinear degenerate equations in $\mathbbm{H}^1$. For further insights into existence results and Liouville and Harnack type qualitative properties of fundamental solutions, we refer to \cite{Cutri}. Li and Wang \cite{Li} introduced a comparison principle for sub-elliptic equations in the Heisenberg setting. Furthermore, in \cite{bard1}, a strong comparison principle is established for degenerate elliptic equations, including those involving Pucci's extremal operators over H\"ormander vector fields. 
	
	Recently, Palatucci and Piccinini \cite{Palat2, Palatucci} considered a broad class of nonlinear integro-differential operators in the Heisenberg group $\mathbbm{H}^N.$ They proved general Harnack inequalities for weak solutions to the Dirichlet problem associated with these nonlinear integro-differential operators, thereby enriching the understanding of such operators in the non-Euclidean context. More recently, the first author and Tyagi \cite{Oza Heis}  introduced mixed local and nonlocal operators in the Heisenberg group and studied the existence and regularity of viscosity solutions to the corresponding equation. We also refer to the work by Branson et al. \cite{Branson} on pseudodifferential operators in $\mathbbm{H}^N$, which explores the Moser-Trudinger inequalities on Cauchy-Riemann (CR) spheres. Manfredini et al. \cite{Manfredi} investigated H\"older continuity and boundedness estimates for nonlinear fractional equations in $\mathbbm{H}^N,$ where they considered equations driven by integro-differential operators. Intereseted readers may also see \cite{Piccinini} and the references therein for more insights.

	In the context of Fujita theory, we recall some recent results in non-Euclidean settings followed by the seminal work of Pascucci \cite{Pasu}, who generalized Fujita's work to a stratified Lie group. Recently, the second author and Talwar \cite{Talwar} studied the following semilinear subelliptic heat equation with a forcing term $f$ on an arbitrary stratified Lie group $\mathbbm{G}$:
	\begin{align}
		\begin{cases}
			\partial_tu-\Delta_{\mathbbm{G}}u=|u|^p+f, &\text{ in } (0,T)\times\mathbbm{G},\\
			u(0,\cdot)=u_0, &\text{ in } \mathbbm{G}.
		\end{cases}
	\end{align}
	 The operator $\Delta_{\mathbbm{G}}$ is defined as
	\begin{align}
		\Delta_{\mathbbm{G}}u\coloneqq-\sum_{i=1}^mX_i^2,
	\end{align}
	where $\{X_i\}_{i=1}^m$ is a system of generators for $\mathbbm{G}.$ In \cite{Berik}, Borikhanov et al. considered the following semilinear heat problem driven by the Heisenberg sub-Laplace operator:
	\begin{align}\label{Ber}
		\begin{cases}
			\partial_tu-\Delta_{\mathbbm{H}^N}u=|u|^p+f &\text{ in } \mathbbm{H}^N\times (0,T),\\
			u(0,\cdot)=u_0(\cdot) &\text{ in } \mathbbm{H}^N, 
		\end{cases}	
	\end{align}
	for $p>1.$ See also \cite{Ahmad, Alsae, Amb, Poho, Ruzha} for more related earlier works.
	
	Motivated by these advancements in the study of PDEs in non-Euclidean settings, we explore the Fujita theory for the fractional sub-Laplacian on the Heisenberg group. In this paper, we show that the Fujita exponent for the problem 
	\begin{align}\label{eq 0.1}
		\begin{cases}
			\partial_tu+(-\Delta_{\mathbbm{H}^N})^su=|u|^p+f &\text{in } \,\mathbbm{H}^N\times (0,T),\\
			u(0,\cdot)=u_0(\cdot) &\text{in } \,\mathbbm{H}^N,
		\end{cases}	
	\end{align}
	is $$p_F=\frac{Q}{Q-2s}.$$
	Here, $(-\Delta_{\mathbbm{H}^N})^s$ denotes the fractional sub-Laplacian of order $s\in (0,1)$ on the Heisenberg group $\mathbbm{H}^N$ (as defined by \eqref{frac}), and $Q\coloneqq 2N+2$ is the homogeneous dimension of $\mathbbm{H}^N.$ 
	
	It is worth mentioning that the study of nonlocal operators in non-Euclidean settings is relatively recent, with only a few papers dedicated to this topic. The main objective of this work is to extend the Fujita phenomenon to these nonlocal operators.
    
	Our first main result refers to the local well-posedness of solutions to \eqref{eq 0.1}. This reads as follows:
	\begin{thm}[Local existence]\label{Local}
		Given $p>1$ and $u_0,f\in L^\infty(\mathbbm{H}^N),$ the following holds:
		\begin{enumerate}
			\item [(i)] There exists a fixed $T=T(u_0)>0,$ and a unique mild solution $u\in C([0,T],L^\infty(\mathbbm{H}^N))$ to \eqref{eq 0.1}. 
			\item [(ii)] The solution can be uniquely extended to a maximum interval $[0,T_{\text{max}}).$ Moreover, if $T_{\text{max}}$ is finite, then $\|u(t)\|_{L^\infty(\mathbbm{H}^N)}\To\infty$ as $t\To T_{\text{max}}.$
			\item [(iii)] Let $u_0,$ $f\in L^\infty(\mathbbm{H}^N)\cap L^q(\mathbbm{H}^N)$ for $q\in [1,\infty].$ Then
			\begin{align}
				u\in C([0,T_{\text{max}}),L^\infty(\mathbbm{H}^N))\cap C([0,T_{\text{max}}),L^q(\mathbbm{H}^N)).
			\end{align}
		\end{enumerate} 
	\end{thm}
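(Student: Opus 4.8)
The plan is to pass to the Duhamel (mild) formulation and close a Banach fixed-point argument. Let $\{S(t)\}_{t\ge 0}$ denote the semigroup $S(t)=e^{-t(-\Delta_{\mathbbm{H}^N})^s}$ generated by $-(-\Delta_{\mathbbm{H}^N})^s$, obtained by Bochner subordination from the sub-Laplacian heat semigroup on $\mathbbm{H}^N$. A mild solution of \eqref{eq 0.1} is, by definition, a fixed point of
\begin{align}\label{duhamel}
	\Phi(u)(t)\coloneqq S(t)u_0+\int_0^t S(t-\tau)\big(|u(\tau)|^p+f\big)\,d\tau .
\end{align}
Two structural properties of $\{S(t)\}$ drive everything: since its convolution kernel is nonnegative with unit mass, $S(t)$ is a contraction on every $L^q(\mathbbm{H}^N)$, i.e.\ $\|S(t)g\|_{L^q}\le\|g\|_{L^q}$ for $q\in[1,\infty]$; and it is strongly continuous on $L^q(\mathbbm{H}^N)$ for $q\in[1,\infty)$. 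I would take these as inputs from the preliminary theory of the fractional sub-Laplacian.

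For part (i), fix $R\coloneqq 2\|u_0\|_{L^\infty}+1$ and work in the closed ball $B_R$ of radius $R$ in the Banach space $X_T\coloneqq L^\infty((0,T)\times\mathbbm{H}^N)$ with the sup norm. Using the $L^\infty$-contractivity of $S(t)$ together with $\||u|^p\|_{L^\infty}\le\|u\|_{L^\infty}^p$, one gets $\|\Phi(u)(t)\|_{L^\infty}\le\|u_0\|_{L^\infty}+T\big(R^p+\|f\|_{L^\infty}\big)$, so $\Phi$ maps $B_R$ into itself for $T$ small. The nonlinear step rests on the elementary bound $\big||a|^p-|b|^p\big|\le p\,\max(|a|,|b|)^{p-1}|a-b|$, valid for $p>1$, which yields $\|\,|u|^p-|v|^p\,\|_{L^\infty}\le pR^{p-1}\|u-v\|_{X_T}$ on $B_R$ and hence $\|\Phi(u)-\Phi(v)\|_{X_T}\le pR^{p-1}T\,\|u-v\|_{X_T}$. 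Choosing $T=T(R,\|f\|_{L^\infty})$ with $pR^{p-1}T<1$ makes $\Phi$ a contraction, and Banach's theorem produces a unique bounded mild solution; the continuity $u\in C([0,T],L^\infty(\mathbbm{H}^N))$ then follows from continuity of the Duhamel integral and of $S(t)$, the endpoint $t=0$ being the one place where the $L^\infty$-behaviour of $S(t)$ must be handled with care. Uniqueness in the full space $C([0,T],L^\infty)$, not merely in $B_R$, follows by comparing two solutions on a short interval and applying Gronwall's inequality.

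For the continuation claim (ii), set $T_{\max}\coloneqq\sup\{T>0:\ \text{a solution exists on }[0,T]\}$ and argue by contradiction: since the local existence time in (i) depends only on $\|u_0\|_{L^\infty}$ and $\|f\|_{L^\infty}$, a finite $T_{\max}$ together with $\limsup_{t\to T_{\max}^-}\|u(t)\|_{L^\infty}<\infty$ would allow restarting \eqref{duhamel} from a time $t_0$ near $T_{\max}$ with datum $u(t_0)$ on an interval $[t_0,t_0+\delta]$ of length $\delta$ bounded below uniformly in $t_0$; concatenation would extend $u$ past $T_{\max}$, a contradiction, so $\|u(t)\|_{L^\infty}\to\infty$. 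For (iii) I would propagate the $L^q$ norm along the same identity: when $u(\tau)\in L^\infty\cap L^q$ the estimate $\||u(\tau)|^p\|_{L^q}\le\|u(\tau)\|_{L^\infty}^{p-1}\|u(\tau)\|_{L^q}$ keeps the nonlinearity in $L^q$ with norm controlled on compact time intervals (where $\|u(\tau)\|_{L^\infty}$ is already bounded), so applying the $L^q$-contractivity of $S(t)$ to \eqref{duhamel} and using Gronwall bounds $\|u(t)\|_{L^q}$ locally, while strong continuity of $S(t)$ on $L^q$ for $q<\infty$ supplies the time-continuity; combined with (i) for $q=\infty$ this gives $u\in C([0,T_{\max}),L^\infty)\cap C([0,T_{\max}),L^q)$.

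The main obstacle is the first ingredient rather than the fixed-point machinery: one must pin down the mapping properties of the subordinated semigroup $\{S(t)\}$ on the full $L^q$ scale—positivity preservation, the $L^q$ contraction, and strong continuity—since these are exactly where the Heisenberg-group geometry and the nonlocality of $(-\Delta_{\mathbbm{H}^N})^s$ enter, and the delicate endpoint is the continuity into $L^\infty$ at $t=0$. Once these semigroup facts are secured, steps (i)--(iii) reduce to the standard Picard and continuation arguments.
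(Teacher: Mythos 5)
Your proposal is correct and takes essentially the same approach as the paper: the Duhamel formulation with a Banach fixed point in a ball of $C([0,T],L^\infty(\mathbbm{H}^N))$ resting on the $L^\infty$-contractivity of the subordinated semigroup, the standard restart-and-concatenate argument for the blow-up alternative in (ii), and the inequality $\||u|^p\|_{L^q(\mathbbm{H}^N)}\le\|u\|_{L^\infty(\mathbbm{H}^N)}^{p-1}\|u\|_{L^q(\mathbbm{H}^N)}$ in a mixed $L^\infty\cap L^q$ space for (iii). The only cosmetic differences are your Gronwall-style uniqueness step (the paper instead extends the set where two solutions agree via a short-time contraction estimate) and the precise choice of ball radius and semigroup input, which the paper secures through the subordination formula.
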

	
	Subsequently, we state the next results of this paper that give the associated Fujita exponent for \eqref{eq 0.1}, which is the main goal of the present paper.
	\begin{thm}[Supercritical  Case]\label{Thm Glo2}
		For any $s\in (0,1),$ and $p>\frac{Q}{Q-2s},$ there exists $f$ and the initial data $u_0$ such that the problem \eqref{eq 0.1} admits a positive global-in-time solution.
	\end{thm}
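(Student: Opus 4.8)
The plan is to produce a \emph{global-in-time supersolution} in the Duhamel (mild) formulation and then run a monotone iteration. Writing $P_t\coloneqq e^{-t(-\Delta_{\mathbbm{H}^N})^s}$ for the fractional heat semigroup, a mild solution of \eqref{eq 0.1} is a fixed point of
\[
\Psi[u](t) = P_t u_0 + \int_0^t P_{t-\tau} f \, d\tau + \int_0^t P_{t-\tau} |u(\tau)|^p \, d\tau .
\]
Since $P_t$ is positivity-preserving and an $L^\infty$-contraction, $\Psi$ is monotone on nonnegative functions (for $0\le u\le v$ one has $u^p\le v^p$ and hence $\Psi[u]\le\Psi[v]$). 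Thus, once a supersolution $w\ge 0$ with $\Psi[w]\le w$ and $u_0\le w$ is available, the iterates $u^{(0)}=0$, $u^{(n+1)}=\Psi[u^{(n)}]$ increase and stay below $w$, and monotone convergence yields a global mild solution with $0\le u\le w$. Positivity of $u$ for $t>0$ follows from $u\ge\Psi[0]=P_t u_0+\int_0^t P_{t-\tau} f\,d\tau$ together with $f\ge 0$, $f\not\equiv 0$; and because the uniform-in-time bound $\|u(t)\|_{L^\infty}\le M\|V_f\|_{L^\infty}$ (see below) precludes the blow-up alternative of Theorem~\ref{Local}(ii), the solution furnished by uniqueness there is global, $T_{\max}=+\infty$.

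The supersolution will be stationary. Fix $f_0\ge 0$ bounded and compactly supported with $\int_{\mathbbm{H}^N} f_0>0$, and set $f=\eta f_0$ for small $\eta>0$. Let $V_f\coloneqq\int_0^\infty P_\tau f\,d\tau=(-\Delta_{\mathbbm{H}^N})^{-s}f$ be the associated Riesz potential, i.e.\ the bounded positive stationary solution of $(-\Delta_{\mathbbm{H}^N})^s V_f=f$. Two structural facts drive the argument. First, $P_t V_f=\int_t^\infty P_\sigma f\,d\sigma\le V_f$, so choosing $u_0$ with $0\le u_0\le\epsilon V_f$ gives $P_t u_0+\int_0^t P_{t-\tau} f\,d\tau\le(1+\epsilon)V_f$. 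Second, and crucially, the nonlinear term is controlled by $V_f$ itself:
\[
(-\Delta_{\mathbbm{H}^N})^{-s}\!\big(V_f^{\,p}\big)\le C\,\eta^{\,p-1}\,V_f
\]
whenever $p>p_F=\tfrac{Q}{Q-2s}$. Granting this, put $w=M V_f$ with $M=2(1+\epsilon)$; then $\Psi[w]\le\big(1+\epsilon+C\eta^{\,p-1}M^p\big)V_f\le M V_f=w$ once $\eta$ is small, since $p-1>0$. This closes the iteration.

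The heart of the matter — and the main obstacle — is the pointwise estimate on $(-\Delta_{\mathbbm{H}^N})^{-s}(V_f^{\,p})$, which is precisely where the exponent $p_F$ enters. It rests on the homogeneity of the fractional sub-Laplacian under the Heisenberg dilations $\delta_\lambda$ (degree $2s$), forcing its fundamental solution to obey $G(x,y)\asymp\rho(y^{-1}\!\cdot x)^{-(Q-2s)}$ for the homogeneous (Kor\'anyi-type) norm $\rho$. Compact support of $f_0$ then yields the far-field behaviour $V_{f_0}(x)\asymp\rho(x)^{-(Q-2s)}$, so $V_f^{\,p}\asymp\rho^{-p(Q-2s)}$ at infinity. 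The condition $p>p_F$ is exactly $p(Q-2s)>Q$, i.e.\ $V_f^{\,p}\in L^1(\mathbbm{H}^N)$, and a convolution estimate on $\mathbbm{H}^N$ — splitting $\{\rho(y)\le\rho(x)/2\}$, $\{\rho(y^{-1}x)\le\rho(x)/2\}$ and the complementary far region, and using $|\{\rho\le R\}|\asymp R^Q$ and $\int_{\{\rho\le R\}}\rho^{-(Q-2s)}\asymp R^{2s}$ — gives $\int G(x,y)V_f(y)^p\,dy\asymp\rho(x)^{-(Q-2s)}\asymp V_f(x)$ at infinity, the decay being inherited from the finite mass of $V_f^{\,p}$ (the near-origin piece dominates precisely because $p>p_F$). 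Matching this with the elementary boundedness of $V_{f_0}$ from below and of $(-\Delta_{\mathbbm{H}^N})^{-s}(V_{f_0}^{\,p})$ from above on compacta upgrades the far-field comparison to the global pointwise bound displayed above. Carrying out this non-Euclidean convolution estimate carefully, with the homogeneous norm $\rho$ playing the role of $|x|$ and the Haar measure that of Lebesgue measure, is the principal technical step.
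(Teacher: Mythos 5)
Your proposal is correct, and it reaches the threshold $p_F=\tfrac{Q}{Q-2s}$ for exactly the same underlying reason as the paper, but the architecture is genuinely different. The paper runs a Banach contraction in the weighted space $\Theta^s_M$ of functions dominated by $M\big(1+d_{\mathbbm{H}^N}(\xi)^{Q-2s}\big)^{-1}$, with the explicit algebraic data $u_0(\xi)=\varepsilon d_{\mathbbm{H}^N}(\xi)^{Q-1}\big(1+d_{\mathbbm{H}^N}(\xi)^{2Q-s+\delta}\big)^{-1}$ and $f(\xi)=\varepsilon\big(1+d_{\mathbbm{H}^N}(\xi)^{Q+\delta}\big)^{-1}$; this forces separate heat-kernel estimates for the $e^{-t(-\Delta_{\mathbbm{H}^N})^s}u_0$ term (Lemmas \ref{Lem Q3}, \ref{Lem Q4}, \ref{Lem Q}) and a Lipschitz estimate on $|u_1|^p-|u_2|^p$ (via Lemma \ref{Lem Q5}) to close the contraction. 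You instead take $f=\eta f_0$ compactly supported, promote its Riesz potential $V_f=(-\Delta_{\mathbbm{H}^N})^{-s}f$ to a stationary supersolution, and run a monotone iteration: the semigroup identity $e^{-t(-\Delta_{\mathbbm{H}^N})^s}V_f=\int_t^\infty e^{-\sigma(-\Delta_{\mathbbm{H}^N})^s}f\,d\sigma\le V_f$ disposes of both linear terms with no kernel computation, and monotonicity of the Duhamel map on nonnegative functions replaces the contraction step, so the only hard estimate left is $(-\Delta_{\mathbbm{H}^N})^{-s}\big(V_{f_0}^{\,p}\big)\lesssim V_{f_0}$. That estimate is where $p>p_F$ enters in both proofs, and its analytic content coincides with the paper's Lemma \ref{Lem Q2} (convolution of $d_{\mathbbm{H}^N}^{-(Q-2s)}$ against a profile decaying like $d_{\mathbbm{H}^N}^{-p(Q-2s)}$ with $p(Q-2s)>Q$), combined with the bound $\int_0^\infty h_s(t,\cdot)\,dt\lesssim\|\cdot\|_{\mathbbm{H}^N}^{-(Q-2s)}$ from \cite{Garofa} that the paper also invokes in \eqref{ul}. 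What the paper's route buys is a self-map and contraction, hence uniqueness of the solution within its ball; what yours buys is positivity for free, a minimal-solution construction that never needs the Lipschitz estimate, and a cleaner treatment of the data terms — at the price of having to establish the two-sided far-field asymptotics $V_{f_0}\asymp\rho^{-(Q-2s)}$ and the matching on compacta, which you rightly flag as the main technical step and which should be written out with the same care the paper devotes to Lemma \ref{Lem Q2}. One small point to add in a full write-up: the monotone limit is a priori only a pointwise fixed point of the Duhamel map, so to land in the class of mild solutions of Definition \ref{Mild} you should verify $u\in C([0,\infty),L^\infty(\mathbbm{H}^N))$ from the integral equation (e.g.\ choosing $u_0$ continuous and vanishing at infinity), a routine step comparable in rigor to the function-space bookkeeping in the paper's own proof.
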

	
	\begin{thm}[Subcritical case]\label{Thm Glo1}
		Let $s\in (0,1),$ $p\in (1,\frac{Q}{Q-2s}),$ $u_0,f\in L^\infty(\mathbbm{H}^N)\cap L^1_{\text{loc}}(\mathbbm{H}^N)$ with $u_0\geq 0$ and $\int_{\mathbbm{H}^N}f>0.$ Then \eqref{eq 0.1} does not possess a global-in-time solution.
	\end{thm}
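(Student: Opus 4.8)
The plan is to argue by contradiction using the rescaled test-function method (nonlinear capacity), adapted to the nonlocal operator and the anisotropic dilation structure of $\mathbbm{H}^N$. Suppose \eqref{eq 0.1} admits a global-in-time solution $u$. I fix a smooth cutoff $\Phi\colon[0,\infty)\to[0,1]$ with $\Phi\equiv1$ on $[0,1]$ and $\Phi\equiv0$ on $[2,\infty)$, and for $R>0$ set
\[
\varphi_R(x,t)=\Phi\!\left(\frac{\|x\|_{\mathbbm{H}^N}}{R}\right)^{\!m}\Phi\!\left(\frac{t}{R^{2s}}\right)^{\!m},
\]
where $\|\cdot\|_{\mathbbm{H}^N}$ is the homogeneous (Kor\'anyi) gauge, the parabolic scaling $t\sim R^{2s}$ matches the $2s$-homogeneity of $(-\Delta_{\mathbbm{H}^N})^s$ under the anisotropic dilations $\delta_\lambda$, and the exponent $m>p'\coloneqq\frac{p}{p-1}$ is chosen large. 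Testing \eqref{eq 0.1} against $\varphi_R$, integrating over $\mathbbm{H}^N\times(0,\infty)$, integrating by parts in $t$ (the terminal term vanishes by the support of $\varphi_R$), and using the self-adjointness of $(-\Delta_{\mathbbm{H}^N})^s$ to transfer it onto $\varphi_R$, I obtain
\[
\int_0^\infty\!\!\int_{\mathbbm{H}^N}|u|^p\varphi_R +\int_0^\infty\!\!\int_{\mathbbm{H}^N}f\,\varphi_R +\int_{\mathbbm{H}^N}u_0\,\varphi_R(\cdot,0)\le \int_0^\infty\!\!\int_{\mathbbm{H}^N}|u|\left(|\partial_t\varphi_R|+|(-\Delta_{\mathbbm{H}^N})^s\varphi_R|\right).
\]
Since $u_0\ge0$, the initial term is nonnegative and may be discarded; note that $u$ need not be sign-definite, as only $|u|$ enters the right-hand side.

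The next step is Hölder followed by Young's inequality. Writing $|u|=\big(|u|\varphi_R^{1/p}\big)\varphi_R^{-1/p}$ and applying Hölder with exponents $p,p'$ gives
\[
\int_0^\infty\!\!\int_{\mathbbm{H}^N}|u|\left(|\partial_t\varphi_R|+|(-\Delta_{\mathbbm{H}^N})^s\varphi_R|\right)\le \left(\int_0^\infty\!\!\int_{\mathbbm{H}^N}|u|^p\varphi_R\right)^{\!1/p}\mathcal{E}_R^{1/p'},
\]
where $\mathcal{E}_R\coloneqq\int_0^\infty\!\int_{\mathbbm{H}^N}\varphi_R^{-1/(p-1)}\big(|\partial_t\varphi_R|+|(-\Delta_{\mathbbm{H}^N})^s\varphi_R|\big)^{p'}$. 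Absorbing the $|u|^p$-factor by Young's inequality and cancelling the resulting multiple of $\int\!\int|u|^p\varphi_R$ leaves the clean estimate $\int_0^\infty\!\int_{\mathbbm{H}^N}f\,\varphi_R\le\mathcal{E}_R$. I then compute the two sides separately. By the $2s$-homogeneity identity $(-\Delta_{\mathbbm{H}^N})^s(\phi\circ\delta_{1/R})=R^{-2s}\big((-\Delta_{\mathbbm{H}^N})^s\phi\big)\circ\delta_{1/R}$ together with $|\partial_t\varphi_R|\lesssim R^{-2s}$, the change of variables $x=\delta_R\xi$ (Jacobian $R^Q$) and $t=R^{2s}\tau$ yields $\mathcal{E}_R\le C\,R^{Q+2s-2sp'}$. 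For the forcing side, the product structure gives $\int_0^\infty\!\int_{\mathbbm{H}^N}f\,\varphi_R=\big(\int_0^\infty\Phi(\tau)^m\,d\tau\big)R^{2s}\int_{\mathbbm{H}^N}f\,\Phi(\|\cdot\|_{\mathbbm{H}^N}/R)^m$, which is $\ge\tilde c\,R^{2s}$ for all large $R$ because $\int_{\mathbbm{H}^N}f>0$ and $\Phi(\|\cdot\|_{\mathbbm{H}^N}/R)^m\uparrow1$.

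Combining the two bounds gives $\tilde c\,R^{2s}\le C\,R^{Q+2s-2sp'}$, that is, $\tilde c\le C\,R^{Q-2sp'}$. The subcritical hypothesis $p<\frac{Q}{Q-2s}$ is precisely $p'>\frac{Q}{2s}$, i.e. $Q-2sp'<0$, so letting $R\to\infty$ drives the right-hand side to $0$ and contradicts $\tilde c>0$. This rules out a global-in-time solution. The main obstacle is the nonlocal estimate underlying $\mathcal{E}_R\le C\,R^{Q+2s-2sp'}$: unlike the local case, $(-\Delta_{\mathbbm{H}^N})^s\varphi_R$ is \emph{not} supported where the derivatives of the cutoff live, so I must establish a pointwise bound of the form $|(-\Delta_{\mathbbm{H}^N})^s\varphi_R(x)|\le C\,R^{-2s}\varphi_R^{1/p}$ uniformly in $x$. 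I would prove this from the singular-integral representation \eqref{frac}, splitting the kernel integral into a region near $x$ (controlled by the $C^2$ regularity of $\Phi^m$ and second-order cancellation) and a far region (controlled by integrability of $\|\cdot\|_{\mathbbm{H}^N}^{-Q-2s}$ at infinity), and invoking the exact $\delta_\lambda$-homogeneity of the gauge and the kernel to produce the factor $R^{-2s}$; the choice $m>p'$ keeps the weight $\varphi_R^{-1/(p-1)}$ integrable against this bound. Finally, that the mild solution of Theorem \ref{Local} may legitimately be inserted into the weak identity above follows from Duhamel's formula together with the self-adjointness and positivity-preserving character of the fractional heat semigroup generated by $(-\Delta_{\mathbbm{H}^N})^s$.
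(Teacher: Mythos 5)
Your overall scheme (rescaled test functions with the parabolic scaling $\|x\|_{\mathbbm{H}^N}\sim R$, $t\sim R^{2s}$, H\"older--Young absorption of $\iint |u|^p\varphi_R$, and the exponent count $Q-2sp'<0$ exactly when $p<\frac{Q}{Q-2s}$) is the same as the paper's, and your scaling arithmetic matches theirs: with $T=R^2$ your bound $R^{Q+2s-2sp'}$ is the paper's $T^{\frac{Q}{2}+s-\frac{sp}{p-1}}$. The gap is the step you yourself flag as ``the main obstacle,'' and it cannot be repaired in the form you propose: the pointwise bound $|(-\Delta_{\mathbbm{H}^N})^s\varphi_R(x)|\le C R^{-2s}\varphi_R(x,t)^{1/p}$ is \emph{false}, not merely hard. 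Take $\|x\|_{\mathbbm{H}^N}>2R$ and $t\le R^{2s}$: then $\varphi_R(x,t)=0$, so the right-hand side vanishes, while by the representation \eqref{frac} the quantity $(-\Delta_{\mathbbm{H}^N})^s\bigl[\Phi(\|\cdot\|_{\mathbbm{H}^N}/R)^m\bigr](x)$ is strictly negative there (the integrand reduces to $\Phi(\|x\circ\eta\|_{\mathbbm{H}^N}/R)^m+\Phi(\|x\circ\eta^{-1}\|_{\mathbbm{H}^N}/R)^m\ge 0$, positive on a set of positive measure), and it decays only like $R^{Q}\,d_{\mathbbm{H}^N}(x)^{-Q-2s}$. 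Your proposed kernel-splitting argument does prove the uniform bound $|(-\Delta_{\mathbbm{H}^N})^s\varphi_R|\le CR^{-2s}$ and this far-field decay, but neither implies a bound by a function vanishing identically outside a compact set. Consequently $\mathcal{E}_R=+\infty$ (the weight $\varphi_R^{-1/(p-1)}$ is infinite precisely where $(-\Delta_{\mathbbm{H}^N})^s\varphi_R\neq 0$ but $\varphi_R=0$), and the H\"older factorization $|u|\,|(-\Delta_{\mathbbm{H}^N})^s\varphi_R|=\bigl(|u|\varphi_R^{1/p}\bigr)\bigl(\varphi_R^{-1/p}|(-\Delta_{\mathbbm{H}^N})^s\varphi_R|\bigr)$ is meaningless on that region, so the argument does not close.

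The paper's proof avoids exactly this trap by a different ordering of the steps: before any H\"older/Young step it invokes the convexity-type inequality $(-\Delta_{\mathbbm{H}^N})^s(\phi_1^l)\le l\,\phi_1^{l-1}(-\Delta_{\mathbbm{H}^N})^s\phi_1$ (their \eqref{km}, citing \cite[Proposition 2.1]{Ahmad}) with $l=1+\frac{p}{p-1}$, and then pairs the Young inequality so that the resulting error term is $\int_S\int \phi_2^l\,\phi_1\,|(-\Delta_{\mathbbm{H}^N})^s\phi_1|^{p'}$, which carries a \emph{positive} power of the compactly supported cutoff $\phi_1$ and no negative power of the test function; the error integral therefore localizes to $B^{\mathbbm{H}^N}_{2\sqrt T}$ and scales correctly under the dilation homogeneity. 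If you want to keep your cleaner ``$\mathcal{E}_R$'' formulation instead, the standard repair is to abandon compactly supported cutoffs and use a strictly positive profile with algebraic decay (e.g.\ a suitable negative power of $1+\|x\|_{\mathbbm{H}^N}^4$), for which pointwise bounds of the type $|(-\Delta_{\mathbbm{H}^N})^s\varphi|\lesssim R^{-2s}\varphi^{1/p}$ can genuinely hold; as written, your key estimate is false and the proof has a genuine gap.
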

	
	\begin{thm}[Critical case]\label{Thm Glo3}
		Let $s\in (0,1),$ $p=\frac{Q}{Q-2s},$ $u_0,f\in L^\infty(\mathbbm{H}^N)\cap L^1_{\text{loc}}(\mathbbm{H}^N)$ with $ u_0\geq 0$ and $\int_{\mathbbm{H}^N}f>0$ with $f(\xi)\geq \|\xi\|^{\alpha-Q}_{\mathbbm{H}^N},$ for $\|\xi\|_{\mathbbm{H}^N}\geq 1$ and $\alpha\in(0,Q).$ Then \eqref{eq 0.1} does not possess a global-in-time solution.
	\end{thm}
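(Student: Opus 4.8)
The plan is to argue by contradiction with the rescaled test-function (nonlinear capacity) method of Mitidieri--Pohozaev type, carried out in the Heisenberg dilation structure and tuned so that the slow decay hypothesis on $f$ defeats the borderline scaling that occurs exactly at $p=p_F$. Suppose \eqref{eq 0.1} admits a global-in-time solution $u$ on $[0,\infty)$. By Theorem \ref{Local} and the Duhamel representation, $u$ is a distributional solution: using the self-adjointness of $(-\Delta_{\mathbbm{H}^N})^s$, for every nonnegative $\varphi\in C_c^\infty([0,\infty)\times\mathbbm{H}^N)$,
\[
\int_0^\infty\!\!\int_{\mathbbm{H}^N} u\big(-\partial_t\varphi+(-\Delta_{\mathbbm{H}^N})^s\varphi\big)\,d\xi\,dt=\int_0^\infty\!\!\int_{\mathbbm{H}^N}\big(|u|^p+f\big)\varphi\,d\xi\,dt+\int_{\mathbbm{H}^N}u_0\,\varphi(0,\cdot)\,d\xi.
\]
Since $u_0\ge 0$, the last term is nonnegative and may be discarded after the absorption below, so positivity of $u$ itself is not needed.

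Next I would fix cutoffs $\Phi\in C_c^\infty([0,\infty))$ with $\Phi\equiv1$ on $[0,1]$, $\mathrm{supp}\,\Phi\subset[0,2]$, and $\Psi\in C_c^\infty(\mathbbm{H}^N)$, $0\le\Psi\le1$, $\Psi\equiv1$ on the Kor\'anyi ball $\{\|\xi\|_{\mathbbm{H}^N}\le1\}$, and set $\varphi_R(t,\xi)=\Phi(R^{-2s}t)\,\Psi(\delta_{1/R}\xi)^{\ell}$, where $\delta_\lambda$ is the anisotropic Heisenberg dilation and $\ell$ a large power. Exploiting that $(-\Delta_{\mathbbm{H}^N})^s$ is homogeneous of degree $2s$ for $\delta_\lambda$, one obtains the scaling bounds $|\partial_t\varphi_R|\le CR^{-2s}$ and the nonlocal tail bound $|(-\Delta_{\mathbbm{H}^N})^s\varphi_R(\xi)|\le CR^{-2s}(1+\|\delta_{1/R}\xi\|_{\mathbbm{H}^N})^{-(Q+2s)}$; both localizations live on a parabolic box of measure $\sim R^{2s}\cdot R^{Q}$. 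Applying Young's inequality with exponents $p,p'=\tfrac{p}{p-1}$ to the left-hand side to absorb half of $\int\!\!\int|u|^p\varphi_R$, and discarding the initial term, yields
\[
\int_0^\infty\!\!\int_{\mathbbm{H}^N} f\,\varphi_R\,d\xi\,dt+\tfrac12\int_0^\infty\!\!\int_{\mathbbm{H}^N}|u|^p\varphi_R\,d\xi\,dt\le C\,\mathcal{E}(R),\qquad \mathcal{E}(R)\sim R^{\,Q-\frac{2s}{p-1}},
\]
where $\ell$ is chosen large enough that $\big(|\partial_t\varphi_R|^{p'}+|(-\Delta_{\mathbbm{H}^N})^s\varphi_R|^{p'}\big)\varphi_R^{1-p'}$ stays integrable despite the negative power $1-p'$.

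The critical exponent is precisely where $\tfrac{2s}{p-1}=Q-2s$, so that $\mathcal{E}(R)\sim R^{2s}$. The decisive point is that the forcing term now outgrows this borderline rate: using $f(\xi)\ge\|\xi\|_{\mathbbm{H}^N}^{\alpha-Q}$ for $\|\xi\|_{\mathbbm{H}^N}\ge1$ and integrating in Kor\'anyi ``polar'' coordinates $d\xi\sim r^{Q-1}\,dr\,d\sigma$,
\[
\int_0^\infty\!\!\int_{\mathbbm{H}^N} f\,\varphi_R\,d\xi\,dt\;\gtrsim\;R^{2s}\int_{1\le\|\xi\|_{\mathbbm{H}^N}\le R}\|\xi\|_{\mathbbm{H}^N}^{\alpha-Q}\,d\xi\;\sim\;R^{2s}\int_1^R r^{\alpha-1}\,dr\;\sim\;R^{2s+\alpha},
\]
the lower-order contribution of $f$ on the unit ball being absorbed since it is only $O(R^{2s})$. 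Combining the two displays gives $R^{2s+\alpha}\lesssim R^{2s}$ for all large $R$, impossible because $\alpha>0$; this contradiction proves Theorem \ref{Thm Glo3}. The same scheme with merely $\int_{\mathbbm{H}^N}f>0$ recovers Theorem \ref{Thm Glo1}, since there $\int\!\!\int f\varphi_R\gtrsim R^{2s}$ while $Q-\tfrac{2s}{p-1}<2s$ in the subcritical range.

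The step I expect to be the main obstacle is the nonlocal tail of the fractional sub-Laplacian: unlike the classical sub-Laplacian, $(-\Delta_{\mathbbm{H}^N})^s\varphi_R$ does not vanish off $\mathrm{supp}\,\varphi_R$, so the Young term $|(-\Delta_{\mathbbm{H}^N})^s\varphi_R|^{p'}\varphi_R^{1-p'}$ is singular where $\varphi_R=0$. Controlling it requires both the sharp decay estimate above and a careful splitting of space into $\{\varphi_R>0\}$, where Young's inequality with the power $\ell$ applies, and the far region, where one estimates $\int|u|\,|(-\Delta_{\mathbbm{H}^N})^s\varphi_R|$ directly and reabsorbs it using the $|u|^p$-mass already captured on a slightly dilated cutoff. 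Establishing the $\delta_\lambda$-homogeneity and this tail bound for $(-\Delta_{\mathbbm{H}^N})^s$ in the Kor\'anyi gauge is the technical heart of the argument; once it is in place, the scaling bookkeeping pins the threshold exactly at $p_F=\frac{Q}{Q-2s}$.
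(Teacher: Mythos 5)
Your proposal is correct and takes essentially the same approach as the paper: the paper's proof of Theorem \ref{Thm Glo3} simply re-runs the rescaled test-function (nonlinear capacity) estimate from Theorem \ref{Thm Glo1} --- observing that subcriticality enters only after estimate \eqref{s1} --- so that at $p=p_F$ one gets the uniform bound $\int_{B^{\mathbbm{H}^N}_{2\sqrt T}}f\phi_1\leq C$, which is then contradicted by $\int_{B^{\mathbbm{H}^N}_{\sqrt T}}f\gtrsim T^{\alpha/2}\to\infty$ forced by the decay hypothesis on $f$; this is exactly your $R^{2s+\alpha}\lesssim R^{2s}$ contradiction after dividing out the time factor $R^{2s}=T^{s}$. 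The only (minor) divergence is in handling the nonlocal term: where you anticipate a tail-splitting and reabsorption argument, the paper instead invokes the pointwise convexity inequality $(-\Delta_{\mathbbm{H}^N})^s(\phi_1^l)\leq l\phi_1^{l-1}(-\Delta_{\mathbbm{H}^N})^s\phi_1$ of \cite[Proposition 2.1]{Ahmad} applied to the power-of-cutoff test function.
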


	\begin{rem}
		This work builds upon existing Fujita-type results for the sub-Laplacian in the Heisenberg group, extending them to their non-local analogues. As $s\To 1,$ the operator in \eqref{eq 0.1}  approaches the operator in \eqref{Ber} (see \cite{Corni, Ferrarii}). Therefore, the problem in \eqref{eq 0.1} can be viewed as a natural nonlocal extension of the problem in \eqref{Ber}. 
	\end{rem}
	
	Let us briefly outline our main strategy and the underlying heuristic ideas of our approach. Our first step, presented in Theorem \ref{Local}, is to establish the local well-posedness of solutions to \eqref{eq 0.1}. This result is obtained via the Banach fixed point theorem, applied in the function space:
	\begin{align}
		\Theta^s_T\coloneqq\bigg\{v\in C([0,T],L^\infty(\mathbbm{H}^N)):\|v\|_{L^\infty(\mathbbm{H}^N)}\leq 2\delta(u_0,f), v(0)=u_0\bigg\},
	\end{align}
	where $$\|v\|_{\Theta^s_T}=\|v\|_{L^\infty((0,T),L^\infty(\mathbbm{H}^N))}$$  and $$\delta(u_0,f)=\max\{\|u_0\|_{L^\infty(\mathbbm{H}^N)}, \|f\|_{L^\infty(\mathbbm{H}^N)}\}.$$
	Next, we introduce the notions of mild and weak solutions to \eqref{eq 0.1} in Definitions \ref{Mild} and \ref{Weak}, respectively. In Lemma \ref{mild to weak}, we show that any mild solution to \eqref{eq 0.1} is also a weak solution. Building upon this, we demonstrate that the corresponding Fujita exponent is $\frac{Q}{Q-2s},$ and provide the global existence of solutions for exponents $p>\frac{Q}{Q-2s}$ in Theorem \ref{Thm Glo2}.  The proof of this theorem relies on a few key lemmas, \ref{Lem Q1}--\ref{Lem Q}, which provide crucial estimates involving the Kor\'anyi distance (see \eqref{diss}). In the proof, we use the Banach fixed point theorem in the following space:
	\begin{align}
		\Theta^s_M\coloneqq\bigg\{v\in C([0,\infty),C_c(\mathbbm{H}^N)):0\leq v(t,\xi)\leq \frac{M}{1+d_{\mathbbm{H}^N}(\xi)^{Q-2s}} \text{ for }t\in [0,\infty),\, \xi\in\mathbbm{H}^N\bigg\}, \text{ for }M>0.
	\end{align}
	Subsequently, we prove in Theorem \ref{Thm Glo1} that if
	$p< \frac{Q}{Q-2s},$
	then every weak solution to \eqref{eq 0.1} blows up in finite time. Finally, in Theorem \ref{Thm Glo3}, we address the critical case $p=\frac{Q}{Q-2s},$ and prove that for a suitable class of functions $f,$ the problem \eqref{eq 0.1} does not admit any global-in-time solution.
	
	Thus, the main objective of this paper is to first establish the local existence of solutions to \eqref{eq 0.1}, as stated in Theorem \ref{Local}. We then examine the existence and non-existence of global-in-time solutions depending on the range of $p,$ which follows from Theorems \ref{Thm Glo2}, \ref{Thm Glo1} and \ref{Thm Glo3}.
	
	The organization of this paper is as follows. In Section \ref{notations}, we present the basic definitions and notations as well as introduce the framework in which we work. Section \ref{wellposedness} is devoted to the local existence result, while Section \ref{Fujitaexponent} contains the proofs of the results concerning global-in-time solutions. 
	
	Throughout the proofs of our main results, the constant $C$ may vary from line to line. However, we explicitly indicate whenever it depends on any parameters.
	\section{Preliminaries and notations}
    \label{notations}
	We begin by briefly recalling some fundamental concepts related to the Heisenberg group $\mathbbm{H}^N.$ Elements in $\mathbbm{H}^N$ are represented as
	\begin{align}
		\xi\coloneqq(z,w)=(x_1,\dots,x_N,y_1,\dots,y_N,w),	
	\end{align}
	where $x_i,\,y_i\in\mathbb{R}$ for $1\leq i\leq N,$ and $w\in\mathbb{R}.$
	The Heisenberg group $\mathbbm{H}^N$ is defined as the triplet $\big(\mathbb{R}^{2N+1},\circ,\{\Phi_\lambda\}\big)$, where $\mathbb{R}^{2N+1}$ is the underlying space, $\circ$ denotes the group law, and $\{\Phi_\lambda\}$ represents the dilation group.
	The group law $\circ$ is defined as follows for two elements $\xi=(x,y,w)$ and $\xi'=(x',y',w')\in \mathbbm{H}^N:$
	\begin{align}
		\xi\circ \xi'&=\big(x+x',y+y',w+w'+2\langle y,x'\rangle- 2\langle x,y'\rangle\big),
	\end{align}
	where $\langle\cdot,\cdot\rangle$ denotes the standard inner product in $\mathbb{R}^N.$ Expanding this for components, we have
	\begin{align}
		\xi\circ\xi'=\bigg(x_1+x_1',\dots,x_N+x_N',y_1+y_1',\dots,y_N+y_N',w+w'+2\sum_{i=1}^{N}\big(y_ix_i'-x_iy_i'\big)\bigg).
	\end{align}
	The Heisenberg group $(\mathbb{R}^{2N+1},\circ)$ is a Lie group with identity element at the origin $\textbf{0},$ and the inverse of any element $\xi=(x,y,w)$ is simply given by $\xi^{-1}=(-x,-y,-w).$  Also, the dilation group $\{\Phi_\lambda\}_{\lambda>0}$ is defined as a family of mappings
	\begin{align}
		\Phi_\lambda: \mathbb{R}^{2N+1}\longrightarrow\mathbb{R}^{2N+1},
	\end{align}
	such that
	\begin{align}
		\xi\longmapsto\Phi_\lambda(\xi)\coloneqq\big(\lambda x,\lambda y,\lambda^2 w\big).
	\end{align} 
	The group $\mathbbm{H}^N$ is also referred to as the Heisenberg-Weyl group in $\mathbb{R}^{2N+1}$. The Jacobian basis of the Heisenberg Lie algebra of $\mathbbm{H}^N$ is given by the following vector fields:
	\begin{align}
		X_i=\partial_{x_i}+2y_{i}\partial_w,\, Y_{i}=\partial _{y_i}-2x_i\partial_w, \, 1\leq i\leq N,\quad W=\partial_w.
	\end{align}
	Given a domain $\Omega\subset \mathbbm{H}^N,$ for $u\in C^1(\Omega,\mathbb
	{R}),$ the subgradient (also known as the horizontal gradient) or the Heisenberg gradient $\nabla_{\mathbbm{H}^N} u$ is defined as
	\begin{align}
		\nabla_{\mathbbm{H}^N} u(\xi)\coloneqq\big(X_1u(\xi),\dots,X_{N}u(\xi),Y_1u(\xi),\dots,Y_Nu(\xi)\big).
	\end{align}
	Next, we explore the commutation relation between the vector fields. We have the following: 
	\begin{align}
		[X_i,Y_{i}]&=X_iY_{i}-Y_{i}X_i\\
		&=(\partial_{x_i}+2y_{i}\partial_w)(\partial_{y_{i}}-2x_i\partial_w)-(\partial_{y_{i}}-2x_i\partial_w)(\partial_{x_i}+2y_{i}\partial_w)\\
		&=-4\partial_w, \text{ for }1\leq i\leq N.
	\end{align}
	This implies that
	\begin{align}
		\text{rank}\big(\text{Lie}\{X_1,X_2,\dots,X_{2N},W\}(0,0)\big)=2N+1,
	\end{align}
	which is the Euclidean dimension of $\mathbbm{H}^N$. This is known as H\"ormander's rank condition in the literature. The Jacobian determinant of the dilation $\Phi_\lambda$ is $\lambda^Q$, where $Q=2N+2$ is the homogeneous dimension of $\mathbbm{H}^N.$
	Throughout this paper, we consider the following standard \textit{homogeneous norm} (see Definition \ref{homo}) on $\mathbbm{H}^N$$\colon$ 	\begin{align}\label{norm}
		\|\xi\|_{\mathbbm{H}^N}\text{:=}\,\bigg[\bigg(\sum_{i=1}^N \big(x_i^2+y_i^2\big)^2\bigg)+w^2\bigg]^{\frac{1}{4}},
	\end{align} 
	where 	
	\begin{align}
		\xi =(x_1,\dots,x_N,y_1,\dots,y_N,w)\in\mathbbm{H}^N.
	\end{align} This is referred to as Kor\'anyi norm in the Heisenberg group. The corresponding distance on $\mathbbm{H}^N$ is defined as
	\begin{align}\label{diss}
		d_{\mathbbm{H}^N}(\xi,\hat{\xi})\text{:=}\,\|\hat{\xi}^{-1}\circ\xi\|_{\mathbbm{H}^N},
	\end{align}
	where $\hat{\xi}^{-1}$ is the inverse of $\hat{\xi}$ with respect to the group operation $\circ,$ that is, $\hat{\xi}^{-1}=-\hat{\xi}.$

	Finally, the sub-Laplacian, also known as the Heisenberg Laplacian or Kohn-Laplace operator, is the self-adjoint operator defined as
	\begin{align}
		\Delta_{\mathbbm{H}^N}\coloneqq\sum_{i=1}^NX_i^2+Y_i^2.	
	\end{align}
	In terms of the Euclidean derivatives, it can be expressed as
	\begin{align}
		\Delta_{\mathbbm{H}^N}=\sum_{i=1}^N\frac{\partial^2}{\partial x_i^2}+\frac{\partial^2}{\partial y_i^2}+4y_i\frac{\partial^2}{\partial x_i\partial w}-4x_i\frac{\partial^2}{\partial y_i\partial w}+4\big(x_i^2+y_i^2\big)\frac{\partial^2}{\partial w^2}.
	\end{align} 
	Let us now define a $2N\times(2N+1)$ matrix whose rows are the coefficients of the vector fields $X_i$, i.e.,
	\begin{align}\label{sgm}
		\sigma=\begin{bmatrix}
			I_N &0 &2y\\
			0 &I_N &-2x
		\end{bmatrix}
	\end{align}
	for $x=(\xi_1,\dots,\xi_N)^\intercal$ and $y=(\xi_{N+1},\dots,\xi_{2N})^\intercal.$ The Heisenberg Laplacian can be also written as
	\begin{align}
		\Delta_{\mathbbm{H}^N}u=\text{div}\big(\sigma^\intercal\sigma\nabla u\big),
	\end{align}
	where $\nabla$ denotes the usual Euclidean gradient of $u.$ 
	
	The fractional sub-Laplacian operator is defined as follows$\colon$
	\begin{align}\label{frac}
		(-\Delta_{\mathbbm{H}^N})^su(\xi)=-\frac{1}{2}c(N,s)\int_{\mathbbm{H}^N}\frac{u(\xi\circ\eta)+u(\xi\circ\eta^{-1})-2u(\xi)}{\|\eta\|^{Q+2s}_{\mathbbm{H}^N}}d\eta,\,\,u\in S(\mathbbm{H}^N),\,\,\,\xi\in \mathbbm{H}^N, Q=2N+2,
	\end{align} 
	see  \cite[Section 5]{Palatucci} and  \cite[Proposition 1.4]{Ferrarii}. Here, $c(N,s)$ is a positive constant depending on the dimension $N$\,and the fractional parameter $s\in (0,1),$ and $S(\mathbbm{H}^N)$ denotes the Schwartz class of functions on $\mathbbm{H}^N$ (as detailed in  \cite[Chapter 1]{Folland}). It is worth mentioning that 
	$(-\Delta_{\mathbbm{H}^N})^s$ approximates $-\Delta_{\mathbbm{H}^N}$ as $s\To 1,$ see  \cite[Proposition 2.1.3]{Corni} and  \cite[Proposition 1.3]{Ferrarii} for the details.
	
	We recall from \cite{Ferrarii} that the heat semigroup associated with the fractional sub-Laplacian $(-\Delta_{\mathbbm{H}^N})^s$ is given by
	\begin{align}\label{hs}
		e^{-t(-\Delta_{\mathbbm{H}^N})^{s}}f(\xi)&\coloneqq f*h_s(\cdot,t)(\xi)\\
		&=\int_{\mathbbm{H}^N}h_s(t,\eta^{-1}\circ\xi)f(\eta)d\eta,
	\end{align}
	for $f\in L^2(\mathbbm{H}^N).$ Here, $h_s$ is the integral kernel of this semigroup. Moreover, for any $\xi\in\mathbbm{H}^N$ and $t>0,$ we have the following estimate on the heat kernel:
	\begin{align}\label{Esti}
		c^{-1}\min\bigg\{t^{-\frac{Q}{2s}},\frac{t}{\|\xi\|_{\mathbbm{H}^N}^{Q+2s}}\bigg\} \leq h_s(t,\xi)\leq c\min\bigg\{t^{-\frac{Q}{2s}},\frac{t}{\|\xi\|_{\mathbbm{H}^N}^{Q+2s}}\bigg\},
	\end{align}
	for some positive constant $c.$ Also, $$h_s(0,\cdot)=0 \text{ in }\mathbbm{H}^N.$$ The details can be found in  \cite[Section 2]{Ferrarii} and also in \cite{Chen P, Varo}. 
	Next, we define the notions of mild and weak solutions to \eqref{eq 0.1} as follows:
	\begin{defn}[Weak solution]\label{Weak}
		Let $u_0,f\in L^1_{\text{loc}}(\mathbbm{H}^N).$ Then $u\in L^p_{\text{loc}}((0,T),L^p_{\text{loc}}(\mathbbm{H}^N))$ is called a local-in-time weak solution to \eqref{eq 0.1} if 
		\begin{align}
			-\int_0^T\int_{\mathbbm{H}^N}u(\partial_t\phi-(-\Delta_{\mathbbm{H}^N})^s\phi) =\int_0^T\int_{\mathbbm{H}^N}|u|^p\phi +\int_{\mathbbm{H}^N}u_0\phi(0,\cdot)+\int_0^T\int_{\mathbbm{H}^N} f\phi,
		\end{align}
		holds for any non-negative test function $\phi\in C_c^1((0,T),C_c^2(\mathbbm{H}^N))$ with $\phi(\cdot,T)=0.$ 
		Moreover, $u$ is called a global-in-time weak solution if $T=+\infty.$ 
	\end{defn}
	
	\begin{defn}[Mild solution]\label{Mild}
		A function $u\in C([0,T],L^\infty(\mathbbm{H}^N))$ is called a local-in-time mild solution to \eqref{eq 0.1} if
		\begin{align}\label{eq Mild}
			u(t)=e^{-t(-\Delta_{\mathbbm{H}^N})^s}u_0+\int_0^te^{-(t-\tau)(-\Delta_{\mathbbm{H}^N})^s}(|u(\tau)|^p+f)d\tau
		\end{align}
		holds for any $t\in (0,T].$ Moreover, $u$ is called a global-in-time mild solution if $T=+\infty.$ 
	\end{defn}
	
	\begin{defn}\cite[Definition 2.1]{Palatucci}\label{homo}
		A homogeneous norm on $\mathbbm{H}^N$ is a continuous function (with respect to Euclidean topology)
		\begin{align}
			d_0\text{:}\,\mathbbm{H}^N\longrightarrow [0,\infty),
		\end{align}
		that satisfies the following properties$\colon$
		\begin{enumerate}
			\item [(i)] $d_0(\Phi_\lambda(\xi))=\lambda d_0(\xi),\text{ for all }\lambda>0\, \text{ and } \,\xi\in \mathbbm{H}^N,$
			\item [(ii)] $d_0(\xi)=0 \text{ if and only if }\xi=0.$
		\end{enumerate}
	\end{defn}
	\noindent Additionally, we say that the homogeneous norm is symmetric if
	\begin{align}
		d_0(\xi^{-1})=d_0(\xi),\text{ for all }\xi\in \mathbbm{H}^N.
	\end{align}
	In this paper, we use the homogeneous norm (Kor\'anyi norm) given by \eqref{norm} in $\mathbbm{H}^N.$ For fixed $\xi_0\in \mathbbm{H}^N\text{ and radius }\, R>0,$ the Kor\'anyi-ball of radius $R$ around $\xi_0$ is defined as
	\begin{align}\label{Ball}
		B_R^{\mathbbm{H}^N}(\xi_0)\text{:=}\,\bigg\{\xi\in \mathbbm{H}^N\,:\,\|\xi_0^{-1}\circ\xi\|_{\mathbbm{H}^N}<R\bigg\},
	\end{align}
	where $\|\cdot\|_{\mathbbm{H}^N}$ is the norm defined in \eqref{norm}. $B_R^{\mathbbm{H}^N}$ is also known as a \textit{Heisenberg-ball}.
	
	%
	%
	\section{Local well-posedness}
    \label{wellposedness}
	In this section, as mentioned earlier, our objective is to establish the local existence of solutions to \eqref{eq 0.1}. For this, we first prove the following auxiliary lemma.
	\begin{lem}
		The heat semigroup associated with $(-\Delta_{\mathbbm{H}^N})^{s}$ satisfies the following estimate:
		\begin{align}\label{infty}
			\|e^{-t(-\Delta_{\mathbbm{H}^N})^{s}}g\|_{L^\infty(\mathbbm{H}^N)}\leq \|g\|_{L^\infty(\mathbbm{H}^N)},
		\end{align}
		for any $g\in L^\infty(\mathbbm{H}^N)$ and $t>0.$
	\end{lem}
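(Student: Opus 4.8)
The plan is to run everything through the convolution representation \eqref{hs} of the semigroup, reducing the claim to two elementary facts about the kernel: its nonnegativity and the conservation of mass $\int_{\mathbbm{H}^N}h_s(t,\eta)\,d\eta=1$. First I would record that $h_s(t,\cdot)\geq 0$ for every $t>0$, which is immediate from the lower bound in \eqref{Esti}, being strictly positive. Then, for $g\in L^\infty(\mathbbm{H}^N)$ and a fixed $\xi\in\mathbbm{H}^N$, I would estimate directly from \eqref{hs}, using nonnegativity of $h_s$ to factor out the sup norm:
\begin{align}
\bigl|e^{-t(-\Delta_{\mathbbm{H}^N})^{s}}g(\xi)\bigr|\leq \int_{\mathbbm{H}^N}h_s(t,\eta^{-1}\circ\xi)\,|g(\eta)|\,d\eta\leq \|g\|_{L^\infty(\mathbbm{H}^N)}\int_{\mathbbm{H}^N}h_s(t,\eta^{-1}\circ\xi)\,d\eta.
\end{align}

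Next I would dispose of the remaining kernel integral by a change of variables. The map $\eta\mapsto\eta^{-1}\circ\xi$ is the composition of the inversion $\eta\mapsto\eta^{-1}$ with the right translation $\zeta\mapsto\zeta\circ\xi$; since $\mathbbm{H}^N$ is unimodular with Haar measure equal to Lebesgue measure, both operations are measure preserving, whence
\begin{align}
\int_{\mathbbm{H}^N}h_s(t,\eta^{-1}\circ\xi)\,d\eta=\int_{\mathbbm{H}^N}h_s(t,\zeta)\,d\zeta.
\end{align}
Combining the two displays and taking the supremum over $\xi\in\mathbbm{H}^N$ then gives \eqref{infty}, provided the last integral equals $1$.

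The one genuinely nontrivial point — and the step I expect to be the main obstacle — is the mass conservation $\int_{\mathbbm{H}^N}h_s(t,\zeta)\,d\zeta=1$ for all $t>0$. First note that the upper bound in \eqref{Esti} guarantees $h_s(t,\cdot)\in L^1(\mathbbm{H}^N)$: it is bounded near the origin, and for large $\|\zeta\|_{\mathbbm{H}^N}$ it decays like $\|\zeta\|_{\mathbbm{H}^N}^{-(Q+2s)}$, which is integrable since $Q+2s>Q$. Hence $\phi(t)\coloneqq\int_{\mathbbm{H}^N}h_s(t,\zeta)\,d\zeta$ is finite, and by the computation above $\phi(t)=e^{-t(-\Delta_{\mathbbm{H}^N})^{s}}1$ is independent of the spatial variable. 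Since the constant function is annihilated by the nonlocal operator — indeed the second difference $u(\xi\circ\eta)+u(\xi\circ\eta^{-1})-2u(\xi)$ in \eqref{frac} vanishes identically when $u\equiv 1$ — the heat equation yields $\phi'(t)=-(-\Delta_{\mathbbm{H}^N})^{s}\phi(t)=0$, so $\phi$ is constant in $t$; letting $t\To 0^{+}$ and using that $h_s(t,\cdot)$ acts as an approximate identity gives $\phi\equiv 1$. Alternatively, and more cleanly, one may simply invoke the standard fact that $h_s(t,\cdot)$ is a probability density for each $t>0$, which reflects the stochastic completeness of the subordinated heat semigroup on $\mathbbm{H}^N$ (see \cite{Ferrarii, Varo}); this reduces the lemma to the two-line estimate above.
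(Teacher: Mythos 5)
Your proof is correct, but it takes a genuinely different route from the paper's. The paper never touches the kernel $h_s$ at all: it invokes the Bochner--Yosida subordination formula, writing $e^{-t(-\Delta_{\mathbbm{H}^N})^{s}}g=\int_0^\infty f_{t,s}(\tau)\,e^{\tau\Delta_{\mathbbm{H}^N}}g\,d\tau$ with the subordinator density $f_{t,s}$ of \cite{Yosida}, and then combines the known $L^\infty$-contraction of the \emph{classical} heat semigroup $e^{\tau\Delta_{\mathbbm{H}^N}}$ (cited from \cite{Gre}) with the normalization $\int_0^\infty f_{t,s}(\tau)\,d\tau=1$ from \cite[Chapter IX]{Yosida}. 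You instead work directly with the fractional kernel: positivity from the lower bound in \eqref{Esti}, the Haar-measure change of variables $\eta\mapsto\eta^{-1}\circ\xi$, and the mass identity $\int_{\mathbbm{H}^N}h_s(t,\zeta)\,d\zeta=1$. The trade-off is instructive: your argument is more self-contained in spirit and makes transparent that the estimate is exactly the sub-Markov property of the kernel (it also yields positivity-preservation for free, which the paper's argument does not exhibit), but its one nontrivial ingredient --- stochastic completeness of $h_s$ --- is, in the literature you would cite (\cite{Ferrarii, Varo}), itself established by subordination from mass conservation of the classical kernel, so the two proofs ultimately rest on the same underlying fact, with the paper's version one level upstream. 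Your first, PDE-style justification of the mass identity is the weakest link: differentiating $\phi(t)=e^{-t(-\Delta_{\mathbbm{H}^N})^{s}}1$ requires the constant function to lie in a space where the generator identity is valid (the representation \eqref{hs} is stated for $L^2$), and the step $\lim_{t\to 0^+}\phi(t)=1$ needs the tail estimate from \eqref{Esti} to show the mass concentrates at the origin --- both gaps are fillable by standard arguments, and your fallback to the cited probabilistic fact closes them cleanly, so the proposal stands.
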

	\begin{proof}
		Recall that the heat semigroup associated with the fractional sub-Laplacian $(-\Delta_{\mathbbm{H}^N})^s$ is given by
		\begin{align}
			e^{-t(-\Delta_{\mathbbm{H}^N})^{s}}g(\xi)&\coloneqq g*h_s(t,\cdot)(\xi)\\
			&=\int_{\mathbbm{H}^N}h_s(t,\eta^{-1}\circ\xi)g(\eta)d\eta,
		\end{align}
		for $g\in L^\infty(\mathbbm{H}^N).$ In view of \cite[ expression (2.1)]{Ferrarii}, we have
		\begin{align}
			\|e^{-t(-\Delta_{\mathbbm{H}^N})^{s}}g\|_{L^\infty(\mathbbm{H}^N)}&=\bigg\|\int_{\mathbbm{H}^N} h_s(t,\eta^{-1}\circ\cdot)g(\eta)d\eta\bigg\|_{L^\infty(\mathbbm{H}^N)}\\&=\bigg\|\int_0^\infty f_{t,s}(\tau)e^{\tau\Delta_{\mathbbm{H}^N}}gd\tau \bigg\|_{L^\infty(\mathbbm{H}^N)},
		\end{align}
		where 
		\begin{align}
			f_{t,s}(\tau)\coloneqq\begin{cases}
				\frac{1}{2\pi\iota}\int_{\sigma-\iota\infty}^{\sigma+\iota\infty}e^{\tau z-tz^s}dz & \text{ for }\tau>0,\\[1mm]
				0 &\text{ for }\tau<0,
			\end{cases}
		\end{align}
		is introduced in  \cite[\S IX.11]{Yosida} for $\sigma>0,$ $t>0,$ and $s\in (0,1).$   Furthermore,
		\begin{align}
			\|e^{-t(-\Delta_{\mathbbm{H}^N})^{s}}g\|_{L^\infty(\mathbbm{H}^N)}		
			&\leq\sup_{t\geq 0}\|e^{t\Delta_{\mathbbm{H}^N}}g\|_{L^\infty(\mathbbm{H}^N)}\int_0^\infty f_{t,s}(\tau)d\tau\\
			&\leq \|g\|_{L^\infty(\mathbbm{H}^N)},
		\end{align} 
		follows by utilizing  \cite[Proposition 5]{Gre} and \cite[Proposition 11.3, Chapter IX]{Yosida}. Hence, we get the required estimate. 
	\end{proof}
	
	Now, we establish the key result of this section.
	
	\noindent \textbf{Proof of Theorem \ref{Local}.}
	(i) Firstly, for $$\delta(u_0,f)\coloneqq\max\{\|u_0\|_{L^\infty(\mathbbm{H}^N)}, \|f\|_{L^\infty(\mathbbm{H}^N)}\},$$ let us consider the following Banach space
	\begin{align}\label{e13}
		\Theta^s_T\coloneqq\bigg\{v\in C([0,T],L^\infty(\mathbbm{H}^N)):\|v\|_{L^\infty(\mathbbm{H}^N)}\leq 2\delta(u_0,f), v(0)=u_0\bigg\},
	\end{align}
	where $$\|v\|_{\Theta^s_T}=\|v\|_{L^\infty((0,T),L^\infty(\mathbbm{H}^N))}.$$
	For any $v\in \Theta_T^s,$ we define a map
	\begin{align}\label{Phii}
		\Phi(v)(t)\coloneqq e^{-t(-\Delta_{\mathbbm{H}^N})^s}v(0)+\int_0^te^{-(t-\tau)(-\Delta_{\mathbbm{H}^N})^s}(|v(\tau)|^p+f)d\tau.
	\end{align}
	We begin with proving that for any $v\in\Theta_T^s,$ we have that $\Phi(v)\in\Theta_T^s.$ From \eqref{infty} it implies 
	\begin{align}
		\|\Phi(v)\|_{\Theta_T^s}&\leq \|e^{-t(-\Delta_{\mathbbm{H}^N})^s}v(0)\|_{L^\infty(\mathbbm{H}^N)}+\Bigg\|\int_0^te^{-(t-\tau)(-\Delta_{\mathbbm{H}^N})^s}(|v(\tau)|^p+f)d\tau\Bigg\|_{L^\infty(H^N)}\\
		&\leq \|v(0)\|_{L^\infty(\mathbbm{H}^N)}+t(\|v\|^p_{L^\infty((0,t),L^\infty(\mathbbm{H}^N))}+\|f\|_{L^\infty(\mathbbm{H}^N)}). 
	\end{align}
	It is immediate to see that for $t=T>0,$ sufficiently small, one can get
	\begin{align}
		\|\Phi(v)\|_{\Theta_T^s}\leq 2\delta(u_0,f).
	\end{align}
	This yields $\Phi(v)\in \Theta_T^s.$
	We now proceed to show that for sufficiently small $T>0,$ $\Phi$ is a contraction map from $\Theta^s_T$ to itself. For given any functions $u_1,u_2\in \Theta^s_T,$ we have 
	\begin{align}
		\|\Phi(u_1)(t)-\Phi(u_2)(t)\|_{\Theta_T^s}&=\int_0^te^{-(t-\tau)(-\Delta_{\mathbbm{H}^N})^s}(|u(\tau)|^p-|v(\tau)|^p)d\tau\\
		&\leq 2^{p-1}C(p)\delta(u_0,f)^{p-1}T\|u_1-u_2\|_{L^\infty((0,T),L^\infty(\mathbbm{H}^N))}\\
		&\leq \|u_1-u_2\|_{L^\infty((0,T),L^\infty(\mathbbm{H}^N))},
	\end{align}
	for small enough $T>0.$
	Finally, an application of Banach fixed point theorem ensures the existence of a mild solution to \eqref{eq 0.1}. 
	
	Next, we show the uniqueness part. Let, if exist, $u_1,$ $u_2$ be two mild solutions to \eqref{eq 0.1} for some fixed $T>0$ with $u_1(0)=u_2(0)=u_0.$ Let us define
	\begin{align}\label{sup}
		\tau_0\coloneqq \sup\{t	
		\in [0,T] \text{ such that }u_1(\tau)=u_2(\tau)\, \forall \tau\in [0,t]\}.
	\end{align}
	Assume that $\tau_0\in [0,T).$ Notably, by the continuity of $u_1$ and $u_2$ in time, we have that
	\begin{align}
		u_1(\tau_0)=u_2(\tau_0).
	\end{align}
	Further, we define
	\begin{align}
		\widetilde{u}_1(t)\coloneqq u_1(t+\tau_0),\,\widetilde{u}_2(t)\coloneqq u_2(t+\tau_0).
	\end{align}
	Then, clearly $\widetilde{u}_i\in C([0,T-\tau_0],L^\infty(\mathbbm{H}^N))$ such that it satisfies \eqref{Phii} and $t\in (0,T-\tau_0]$ with $$\widetilde{u}_i(0)=u(\tau_0), \text{ for each } i=1,2.$$ 
	We proceed to show that for some positive constant $C=C(\tau')<1,$ for some $\tau'\in (0,T-\tau_0),$ we have
	\begin{align}\label{ln}
		\sup_{0<t<\tau'}\|\widetilde{u}_1(t)-\widetilde{u}_2(t)\|_{L^\infty(\mathbbm{H}^N)}\leq C\sup_{0<t<\tau'}\|\widetilde{u}_1(t)-\widetilde{u}_2(t)\|_{L^\infty(\mathbbm{H}^N)}.
	\end{align}
	This would immediately imply  that
	\begin{align}
		\widetilde{u}_1(t)=\widetilde{u}_2(t),\quad \forall \tau\in [0,\tau'].
	\end{align}
	This gives us a contradiction with \eqref{sup} that
	\begin{align}
		u_1(t+\tau_0)=u_2(t+\tau_0),\quad\forall t\in [0,\tau'].
	\end{align}
	We obtain using \eqref{infty} that
	\begin{align}
		\|\widetilde{u}_1(t)-\widetilde{u}_2(t)\|_{L^\infty(\mathbbm{H}^N)}&=\bigg\|\int_0^te^{-(t-t')(-\Delta_{\mathbbm{H}^N})^s}\big(|u_1(t')|^p-|u_2(t')|^p)\big)dt'\bigg\|\\
		&\leq 2^{{p-1}}tC(T,\tau_0,\widetilde{u}_1,\widetilde{u}_2)C(p)\delta(u_0,f)^{p-1}\sup_{0<t'<t}\|\widetilde{u}_1(t)-\widetilde{u}_2(t)\|_{L^\infty(\mathbbm{H}^N)},
	\end{align}
	and for $t$ small enough, we obtain the estimate \eqref{ln}.
	This concludes the proof.
	
	(ii) Utilizing the uniqueness of mild solutions to \eqref{eq 0.1} established above, we have the existence of the interval $[0,T_{\text{max}})$ using the well-known argument (see \cite{Caze}), where
	\begin{align}
		T_{\text{max}}\coloneqq \displaystyle{\sup_{T>0}}\{\eqref{eq 0.1} \text{ admits a solution }u\in C([0,T], L^\infty(\mathbbm{H}^N))\}.
	\end{align}
	Assume that $T_{\text{max}}$ is finite and that for each $t\in [0,T_{\text{max}}),$ we have
	\begin{align}
		\|u(t)\|_{L^\infty(\mathbbm{H}^N)}\leq C,
	\end{align} 
	for some constant $C>0.$ Fix $t^*\in (T_{\text{max}}/2,T_{\text{max}})$ and for $\widetilde{t} \in (0,T_{\text{max}}),$ let us consider a space
	\begin{align}
		\mathcal{M}\coloneqq\{v\in C([0,\widetilde{t}], L^\infty(\mathbbm{H}^N)): \|v\|_{C([0,\widetilde{t}), L^\infty(\mathbbm{H}^N))}<2\delta(C,\|f\|_{L^\infty(\mathbbm{H}^N)}),v(0)=u(t^*)\},
	\end{align}
	where $$\delta(C,\|f\|_{L^\infty(\mathbbm{H}^N)})=\max\{C,\|f\|_{L^\infty(\mathbbm{H}^N)}\}.$$ Similar to the arguments followed in the proof of (i) above, we define a map
	\begin{align}\label{Phii2}
		\Phi'(v)(t)\coloneqq e^{-t(-\Delta_{\mathbbm{H}^N})^s}u(t^*)+\int_0^te^{-(t-\tau)(-\Delta_{\mathbbm{H}^N})^s}(|v(\tau)|^p+f)d\tau,
	\end{align} 
	for $v\in \mathcal{M}$ and $t\in [0,\widetilde{t}].$ Like-wise (i), one can see that $\Phi':\mathcal{M}\To\mathcal{M}$ given by  \eqref{Phii2} is a contraction map. Therefore, the Banach fixed point theorem ensures the existence of a fixed point of $\mathcal{M},$ say $v.$ Furthermore, let us take $t^*$ such that
	\begin{align}
		\widetilde{t}+t^*>T_{\text{max}},
	\end{align}
	and consider
	\begin{align}\label{Ttt}
		\overline{u}(t)\coloneqq\begin{cases}
			u(t), &\text{ for }0\leq t\leq {t}^*\\
			v(t-{t}^*) &\text{ for }{t}^*\leq t\leq t^*+\widetilde{t}.
		\end{cases}
	\end{align}
	Observe that $\overline{u}\in C([0,t^*+\widetilde{t}], L^\infty(\mathbbm{H}^N)) $ is a solution of \eqref{eq 0.1}. This, together with \eqref{Ttt}, contradicts the definition of $T_{\text{max}}.$ Therefore, we get that
	\begin{align}
		\|u(t)\|_{L^\infty(\mathbbm{H}^N)}\To\infty
		\text{ as } t\To T_{\text{max}}.
	\end{align}	
	This concludes the proof.
	
	(iii) Instead of the space defined by \eqref{e13}, for
	\begin{align}
		\delta(u_0,f)\coloneqq\max\{\|u_0\|_{L^\infty(\mathbbm{H}^N)}, \|f\|_{L^\infty(\mathbbm{H}^N)}\},
	\end{align}
	and
	\begin{align}
		\delta_q(u_0,f)\coloneqq\max\{\|u_0\|_{L^q(\mathbbm{H}^N)}, \|f\|_{L^q(\mathbbm{H}^N)}\},
	\end{align} 
	consider a set
	\begin{align}
		\widetilde{\Theta}^s_T\coloneqq \bigg\{&v\in C([0,T_{\text{max}}),L^\infty(\mathbbm{H}^N))\cap C([0,T_{\text{max}}),L^q(\mathbbm{H}^N)):\\
		&\qquad\qquad\qquad\|v\|_{L^\infty((0,T_{\text{max}}),L^\infty(\mathbbm{H}^N))}\leq 2\delta(u_0,f),\, \|v\|_{L^\infty((0,T_{\text{max}}),L^q(\mathbbm{H}^N))}\leq 2\delta_q(u_0,f)\bigg\}.
	\end{align}
	We equip the space $\widetilde{\Theta}^s_T$ with the norm induced by the following distance:
	\begin{align}
		d_{q,\infty}(u,v)=\|u-v\|_{L^\infty((0,T_{\text{max}}),L^\infty(\mathbbm{H}^N))}+\|u-v\|_{L^\infty((0,T_{\text{max}}),L^q(\mathbbm{H}^N))},\,u,v\in \widetilde{\Theta}^s_T. 
	\end{align}
	In light of the inequality
	\begin{align}
		\||u(t)|^p\|_{L^q(\mathbbm{H}^N)}\leq \|u(t)\|^{p-1}_{L^\infty(\mathbbm{H}^N)}\|u(t)\|_{L^q(\mathbbm{H}^N)}, 
	\end{align}
	using the argument similar to (i) gives the existence of a unique solution $u\in \widetilde{\Theta}^s_T.$ Hence, $u\in C([0,T_{\text{max}}),L^\infty(\mathbbm{H}^N))\cap C([0,T_{\text{max}}),L^q(\mathbbm{H}^N)).$ Thus, (iii) follows. \qed 
	\\
	
	Next, we prove a one way relationship between the two notion of solutions- namely, the mild and weak solutions to \eqref{eq 0.1}. This demonstrates that the framework of weak solutions is more general than that of mild solutions. The result reads as follows:
	\begin{lem}\label{mild to weak}
		Let $u_0\in L^\infty(\mathbbm{H}^N).$ Let $u$ be a mild solution to \eqref{eq 0.1} in $\mathbbm{H}^N\times [0,T),$ for $T>0.$ Then $u$ is also a weak solution to \eqref{eq 0.1} in $\mathbbm{H}^N\times [0,T).$
	\end{lem}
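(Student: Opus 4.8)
The plan is to start from the mild solution formula in Definition \ref{Mild} and convert each term into the weak form by testing against an arbitrary non-negative $\phi\in C_c^1((0,T),C_c^2(\mathbbm{H}^N))$ with $\phi(\cdot,T)=0$. First I would fix such a $\phi$, multiply the Duhamel identity \eqref{eq Mild} by $(-\Delta_{\mathbbm{H}^N})^s\phi(\xi,t)$ or, more naturally, integrate the mild identity against $\partial_t\phi$ in time, and exploit the semigroup property together with the self-adjointness of $(-\Delta_{\mathbbm{H}^N})^s$ (which is legitimate since $\phi(t,\cdot)\in C_c^2\subset S(\mathbbm{H}^N)$, so the operator acts classically via \eqref{frac}). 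The essential computation is the integration-by-parts-in-time identity
\begin{align}
	\int_0^T\!\!\int_{\mathbbm{H}^N} u\,\partial_t\phi
	= -\int_{\mathbbm{H}^N} u_0\,\phi(0,\cdot)
	- \int_0^T\!\!\int_{\mathbbm{H}^N}\big(\partial_t u\big)\phi,
\end{align}
where the boundary term at $t=T$ vanishes by $\phi(\cdot,T)=0$ and the term at $t=0$ produces exactly $-\int_{\mathbbm{H}^N}u_0\phi(0,\cdot)$ because $u(0)=u_0$.

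The main device is to show that, when paired with a test function, the mild solution satisfies the distributional version of $\partial_t u + (-\Delta_{\mathbbm{H}^N})^s u = |u|^p+f$. Concretely, I would define $S(t)=e^{-t(-\Delta_{\mathbbm{H}^N})^s}$ and use the two properties that make the semigroup framework work: first, $\frac{d}{dt}S(t)g = -(-\Delta_{\mathbbm{H}^N})^s S(t)g = S(t)\big(-(-\Delta_{\mathbbm{H}^N})^s g\big)$ for sufficiently regular $g$, and second, the duality $\int_{\mathbbm{H}^N} \big(S(t)g\big)\,\psi = \int_{\mathbbm{H}^N} g\,\big(S(t)\psi\big)$, which follows from the symmetry of the kernel $h_s(t,\cdot)$ in \eqref{hs} and self-adjointness. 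Differentiating the Duhamel formula \eqref{eq Mild} formally gives $\partial_t u = -(-\Delta_{\mathbbm{H}^N})^s u + |u|^p + f$, and the weak formulation is obtained by transferring $(-\Delta_{\mathbbm{H}^N})^s$ onto $\phi$ via this duality and integrating the time derivative by parts as above. I would carry out the computation directly on the representation \eqref{eq Mild}: write $u(t) = S(t)u_0 + \int_0^t S(t-\tau)\big(|u(\tau)|^p+f\big)\,d\tau$, pair with $-(\partial_t\phi - (-\Delta_{\mathbbm{H}^N})^s\phi)$, and use Fubini together with the semigroup identity $\int_0^T S(t-\tau)\cdots$ reshuffling to reproduce the four terms of Definition \ref{Weak}.

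The main obstacle will be justifying the differentiation and the interchange of integration, since $u$ lies only in $C([0,T],L^\infty(\mathbbm{H}^N))$ and a priori $\partial_t u$ need not exist pointwise, nor is $u$ spatially smooth enough to apply $(-\Delta_{\mathbbm{H}^N})^s$ classically. The clean way around this is to never differentiate $u$ directly: instead I keep all derivatives on the smooth compactly supported test function $\phi$, move $(-\Delta_{\mathbbm{H}^N})^s$ onto $\phi$ by the self-adjoint duality (valid since $\phi(t,\cdot)\in S(\mathbbm{H}^N)$), and handle the time variable by a Fubini-type argument on $\int_0^T\!\int_{\mathbbm{H}^N}\int_0^t$, reducing everything to integrals of $u$ against quantities built from $\phi$ and the kernel $h_s$. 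The estimate \eqref{infty} and the assumption $u_0,f\in L^\infty$ guarantee all the integrands are absolutely integrable against the compactly supported, bounded $\phi$ and $\partial_t\phi$, so Fubini applies; the kernel bound \eqref{Esti} controls the nonlocal term uniformly in $t$.

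Finally I would collect terms: the $S(t)u_0$ contribution yields $\int_{\mathbbm{H}^N}u_0\,\phi(0,\cdot)$, the Duhamel integral against $-\partial_t\phi$ combined with the nonlocal duality produces $\int_0^T\!\int_{\mathbbm{H}^N}(|u|^p+f)\phi$, and the $(-\Delta_{\mathbbm{H}^N})^s\phi$ pairing cancels against the generator appearing from differentiating the semigroup, leaving precisely the identity in Definition \ref{Weak}. Since $\phi\geq 0$ was arbitrary, $u$ is a weak solution on $\mathbbm{H}^N\times[0,T)$, completing the proof.
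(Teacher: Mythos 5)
Your proposal is correct and follows essentially the same route as the paper's proof: pair the Duhamel identity \eqref{eq Mild} with the test function, differentiate in time using the generator property of the semigroup (which produces the $|u(t)|^p+f$ term from the upper limit of the Duhamel integral), transfer $(-\Delta_{\mathbbm{H}^N})^s$ onto $\phi$ by self-adjointness, recombine via the mild formula, and integrate in time so that $\phi(\cdot,T)=0$ leaves only the $\int_{\mathbbm{H}^N}u_0\phi(0,\cdot)$ boundary term. Your remark about keeping all derivatives on $\phi$ to avoid differentiating $u$ is a reasonable refinement of the same argument, not a different method.
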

	\begin{proof}
		Let $f\in L^1(\mathbbm{H}^N).$ Since $u$ is a mild solution, by this, we have using \eqref{eq Mild} that
		\begin{align}\label{l1}
			u(t)= e^{-t(-\Delta_{\mathbbm{H}^N})^s}u_0+\int_0^te^{-(t-\tau)(-\Delta_{\mathbbm{H}^N})^s}(|u(\tau)|^p+f)d\tau.
		\end{align}
		In order to prove the claim, let us consider a test function $\phi\in C_0^1((0,T),C_0^2(\mathbbm{H}^N))$ with $\phi(\cdot, T)=0.$ We multiply both sides of the relation \eqref{l1} with 
		$\phi$ followed by integrating over $\mathbbm{H}^N$ to get
		\begin{align}\label{n,}
			\int_{\mathbbm{H}^N}u\phi=\int_{\mathbbm{H}^N}e^{-t(-\Delta_{\mathbbm{H}^N})^s}u_0\phi+\int_\mathbbm{H^N}\bigg(\int_0^te^{-(t-\tau)(-\Delta_{\mathbbm{H}^N})^s}(|u(\tau)|^p+f)d\tau\bigg)\phi.
		\end{align}
		Here, we recall that $(-\Delta_{\mathbbm{H}^N})^s$ is a self-adjoint operator, that is,
		\begin{align}\label{n,1}
			\int_{\mathbbm{H}^N}v(-\Delta_{\mathbbm{H}^N})^su=\int_{\mathbbm{H}^N}u(-\Delta_{\mathbbm{H}^N})^sv,
		\end{align}
		for all $u,v\in H^s(\mathbbm{H}^N),$ see \cite{Corni}.
		We take the time derivative of each term in \eqref{n,} as follows:
		\begin{align}
			\frac{\partial}{\partial t}\int_{\mathbbm{H}^N}u\phi&=\frac{\partial}{\partial t}\int_{\mathbbm{H}^N}e^{-t(-\Delta_{\mathbbm{H}^N})^s}u_0\phi+\frac{\partial}{\partial t}\int_{\mathbbm{H}^N}\bigg(\int_0^te^{-(t-\tau)(-\Delta_{\mathbbm{H}^N})^s}(|u(\tau)|^p+f)d\tau\bigg)\phi\\
			&=\int_{\mathbbm{H}^N}\frac{\partial}{\partial t}\bigg(e^{-t(-\Delta_{\mathbbm{H}^N})^s}u_0\phi\bigg)+\int_{\mathbbm{H}^N}\frac{\partial}{\partial t}\bigg(\bigg(\int_0^te^{-(t-\tau)(-\Delta_{\mathbbm{H}^N})^s}(|u(\tau)|^p+f)d\tau\bigg)\phi\bigg)\\
			&=-\int_{\mathbbm{H}^N}(-\Delta_{\mathbbm{H}^N})^se^{-t(-\Delta_{\mathbbm{H}^N})^s}u_0\phi+\int_{\mathbbm{H}^N}e^{-t(-\Delta_{\mathbbm{H}^N})^s}u_0\partial_t\phi\\
			&\qquad-\int_{\mathbbm{H}^N}\left((-\Delta_{\mathbbm{H}^N})^s\int_0^te^{-(t-\tau)(-\Delta_{\mathbbm{H}^N})^s}(|u(\tau)|^p+f)d\tau\right)\phi\\
			&\qquad+\int_{\mathbbm{H}^N}\left(\int_0^te^{-(t-\tau)(-\Delta_{\mathbbm{H}^N})^s}(|u|^p+f)\right)\partial_t\phi+\int_{\mathbbm{H}^N}(|u(t)|^p+f)\phi(t).
		\end{align}
		Further, utilizing \eqref{n,1} accords with
		\begin{align}
			\frac{\partial}{\partial t}\int_{\mathbbm{H}^N}u\phi&=-\int_{\mathbbm{H}^N}e^{-t(-\Delta_{\mathbbm{H}^N})^s}u_0\left((-\Delta_{\mathbbm{H}^N})^s\phi\right)+\int_{\mathbbm{H}^N}e^{-t(-\Delta_{\mathbbm{H}^N})^s}u_0\partial_t\phi\\
			&\qquad-\int_{\mathbbm{H}^N}\left(\int_0^te^{-(t-\tau)(-\Delta_{\mathbbm{H}^N})^s}(|u(\tau)|^p+f)d\tau\right)(-\Delta_{\mathbbm{H}^N})^s\phi\\
			&\qquad+\int_{\mathbbm{H}^N}\left(\int_0^te^{-(t-\tau)(-\Delta_{\mathbbm{H}^N})^s}(|u(\tau)|^p+f)d\tau\right)\partial_t\phi+\int_{\mathbbm{H}^N}(|u(t)|^p+f)\phi(t).
		\end{align}
		Moreover, making use of \eqref{l1} in the above equation deduces
		\begin{align}
			\frac{\partial}{\partial t}\int_{\mathbbm{H}^N}u\phi&=\int_{\mathbbm{H}^N}(-\Delta_{\mathbbm{H}^N})^s\phi\bigg(-e^{-t(-\Delta_{\mathbbm{H}^N})^s}u_0
			-\int_0^te^{-(t-\tau)(-\Delta_{\mathbbm{H}^N})^s}(|u(\tau)|^p+f)d\tau\bigg)\\
			&\qquad+\int_{\mathbbm{H}^N}\partial_t\phi\bigg(e^{-t(-\Delta_{\mathbbm{H}^N})^s}u_0
			+\int_0^te^{-(t-\tau)(-\Delta_{\mathbbm{H}^N})^s}(|u(\tau)|^p+f)d\tau\bigg)+\int_{\mathbbm{H}^N}(|u(t)|^p+f)\phi(t)\\
			&=-\int_{\mathbbm{H}^N}u(-\Delta_{\mathbbm{H}^N})^s\phi+\int_{\mathbbm{H}^N}u\partial_t\phi+\int_{\mathbbm{H}^N}(|u(t)|^p+f)\phi(t).
		\end{align} 
		Finally, we integrate the above equation from $0$ to $T,$ and obtain
		\begin{align}
			-\int_{\mathbbm{H}^N}u_0\phi(0,\xi)=-\int_0^T\int_{\mathbbm{H}^N}u(-\Delta_{\mathbbm{H}^N})^s\phi+\int_0^T\int_{\mathbbm{H}^N}u\partial_t\phi+\int_0^T\int_{\mathbbm{H}^N}(|u|^p+f)\phi,
		\end{align}
		equivalently,
		\begin{align}
			-\int_0^T\int_{\mathbbm{H}^N}u\partial_t\phi+\int_0^T\int_{\mathbbm{H}^N}u(-\Delta_{\mathbbm{H}^N})^s\phi=\int_0^T\int_{\mathbbm{H}^N}(|u|^p+f)\phi+\int_{\mathbbm{H}^N}u_0\phi(0,\cdot).
		\end{align}
		Hence, in light of \eqref{Weak}, the above expression infers that $u$ is a weak solution to \eqref{eq 0.1}. This proves the required result.
	\end{proof}
	
	\section{Fujita exponent}
    \label{Fujitaexponent}
	We now proceed to prove the results related to existence of global-in-time solutions. To establish the main results of this  section, we will rely on the following lemmas. The proofs follow the same conceptual framework outlined in  \cite[Proposition 2.1--2.2]{ZhangQ}.
	\begin{lem}\label{Lem Q1}
		For any $\delta>0,$ there exists a constant $C_1=C_1(Q,\delta)>0$ such that
		\begin{align}
			\sup_{\xi\in\mathbbm{H}^N}\int_{\mathbbm{H}^N}\frac{1}{{d_{\mathbbm{H}^N}(\eta,\xi)^{Q-2s}(1+d_{\mathbbm{H}^N}(\eta)^{Q+\delta})}}d\eta\leq C_1.		
		\end{align}
	\end{lem}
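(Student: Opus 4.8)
The plan is to reduce the estimate to two elementary facts about $\mathbbm{H}^N$: the Haar measure is Lebesgue measure and scales with the homogeneous dimension $Q$, so that in ``polar'' form $\int_{\|\zeta\|_{\mathbbm{H}^N}<R}\|\zeta\|_{\mathbbm{H}^N}^{-\gamma}\,d\zeta\simeq R^{Q-\gamma}$ is finite at the origin precisely when $\gamma<Q$, while $\int_{\|\zeta\|_{\mathbbm{H}^N}>R}\|\zeta\|_{\mathbbm{H}^N}^{-\gamma}\,d\zeta$ is finite at infinity precisely when $\gamma>Q$. The two exponents in the lemma are tailored to this dichotomy: the singular factor $d_{\mathbbm{H}^N}(\eta,\xi)^{-(Q-2s)}$ has $Q-2s<Q$, hence is locally integrable near $\eta=\xi$, whereas the weight $(1+\|\eta\|_{\mathbbm{H}^N}^{Q+\delta})^{-1}$ decays like $\|\eta\|_{\mathbbm{H}^N}^{-(Q+\delta)}$ with $Q+\delta>Q$, giving integrability at infinity. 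I will write $d_{\mathbbm{H}^N}(\eta)=\|\eta\|_{\mathbbm{H}^N}$ for the distance to the origin and use freely that $d_{\mathbbm{H}^N}(\eta,\xi)=\|\xi^{-1}\circ\eta\|_{\mathbbm{H}^N}$ is a symmetric, left-invariant metric whose Kor\'anyi gauge obeys the triangle inequality, so that $d_{\mathbbm{H}^N}$ is invariant under left translation, balls may be rescaled by $\Phi_\lambda$, and $\|\xi\|_{\mathbbm{H}^N}\le\|\eta\|_{\mathbbm{H}^N}+d_{\mathbbm{H}^N}(\eta,\xi)$.

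Setting $\rho:=\|\xi\|_{\mathbbm{H}^N}$, I would split $\mathbbm{H}^N$ into three regions adapted to $\xi$: a ball about $\xi$, $\Omega_1:=\{d_{\mathbbm{H}^N}(\eta,\xi)<\rho/2\}$; a ball about the origin, $\Omega_2:=\{\|\eta\|_{\mathbbm{H}^N}<\rho/2\}$; and the complement $\Omega_3$ of $\Omega_1\cup\Omega_2$. The triangle inequality makes $\Omega_1,\Omega_2$ disjoint and shows $\Omega_1\cup\Omega_2\cup\Omega_3=\mathbbm{H}^N$. On $\Omega_1$ one has $\|\eta\|_{\mathbbm{H}^N}\ge\rho/2$, so the weight is $\lesssim(\rho/2)^{-(Q+\delta)}$ while $\int_{\Omega_1}d_{\mathbbm{H}^N}(\eta,\xi)^{-(Q-2s)}\,d\eta\simeq(\rho/2)^{2s}$ by left-translation and scaling, giving a product $\lesssim\rho^{\,2s-Q-\delta}$. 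On $\Omega_2$ one has $d_{\mathbbm{H}^N}(\eta,\xi)\ge\rho/2$, so the singular factor is $\lesssim(\rho/2)^{-(Q-2s)}$ and the weight integrates to the finite constant $C_\delta=\int_{\mathbbm{H}^N}(1+\|\eta\|_{\mathbbm{H}^N}^{Q+\delta})^{-1}\,d\eta$. On $\Omega_3$ both factors are dominated through $\rho/2$: the singular factor by $(\rho/2)^{-(Q-2s)}$ and the tail $\int_{\|\eta\|_{\mathbbm{H}^N}\ge\rho/2}\|\eta\|_{\mathbbm{H}^N}^{-(Q+\delta)}\,d\eta\simeq(\rho/2)^{-\delta}$.

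The one genuine subtlety, and the step I expect to be the main obstacle, is \emph{uniformity in $\xi$}: several of the bounds above (for instance $(\rho/2)^{-(Q-2s)}$ on $\Omega_2$ and $\Omega_3$) blow up as $\rho\to0$, so the three-region argument is really an estimate for large $\rho$. The resolution is a case split. For $\rho>2$ every power of $\rho/2>1$ that appears carries a negative exponent (recall $2s<Q$ and $\delta>0$), so all three contributions are bounded by constants. For $\rho\le 2$ it is cleaner to argue directly: on $\{\|\eta\|_{\mathbbm{H}^N}\le 4\}$ I drop the weight and use $\{\|\eta\|_{\mathbbm{H}^N}\le4\}\subset\{d_{\mathbbm{H}^N}(\eta,\xi)\le 6\}$ together with local integrability of $d_{\mathbbm{H}^N}(\eta,\xi)^{-(Q-2s)}$, while on $\{\|\eta\|_{\mathbbm{H}^N}>4\}$ the inequality $d_{\mathbbm{H}^N}(\eta,\xi)\ge\|\eta\|_{\mathbbm{H}^N}/2$ turns the integrand into $\lesssim\|\eta\|_{\mathbbm{H}^N}^{-(2Q-2s+\delta)}$, whose integral over $\{\|\eta\|_{\mathbbm{H}^N}>4\}$ converges since $Q-2s+\delta>0$. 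Collecting the two regimes yields a bound depending only on $Q$, $s$ and $\delta$; as $s\in(0,1)$ is fixed throughout, this is the constant $C_1$ of the statement, and taking the supremum over $\xi\in\mathbbm{H}^N$ completes the argument. I would finish by checking that every implied constant is genuinely independent of $\xi$.
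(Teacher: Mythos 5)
Your proof is correct, and it is in fact more complete than the paper's own argument, which is essentially a one-line computation: the paper invokes translation invariance of the Haar measure and polar coordinates to reduce everything to the radial integral $\int_0^\infty r^{2s-1}(1+r^{Q+\delta})^{-1}\,dr<\infty$, i.e.\ it evaluates the integral with both factors centered at the origin (the case $\xi=0$) and treats that as the supremum. Strictly speaking, that reduction is not justified by translation invariance alone, because the integrand is not a left translate of a single function of $\eta$: the kernel $d_{\mathbbm{H}^N}(\eta,\xi)^{-(Q-2s)}$ is based at $\xi$ while the weight $(1+d_{\mathbbm{H}^N}(\eta)^{Q+\delta})^{-1}$ is based at the origin. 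The uniformity in $\xi$ that you single out as the one genuine subtlety is precisely what the paper's proof glosses over, and your three-region decomposition $\Omega_1,\Omega_2,\Omega_3$ together with the separate regime $\rho=\|\xi\|_{\mathbbm{H}^N}\le 2$ supplies exactly the missing step; all your exponent counts are right ($2s-Q-\delta$, $-(Q-2s)$ and $-(Q-2s)-\delta$ are negative, so the large-$\rho$ contributions are bounded, while for $\rho\le 2$ you use local integrability of $d_{\mathbbm{H}^N}(\eta,\xi)^{-(Q-2s)}$ near $\xi$ and the decay $\|\eta\|_{\mathbbm{H}^N}^{-(2Q-2s+\delta)}$ at infinity, integrable since $Q-2s+\delta>0$). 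Two small remarks: your appeal to an exact triangle inequality for the Kor\'anyi gauge is legitimate (it is a genuine metric, by a theorem of Cygan), but if you prefer not to rely on that fact, the pseudo-triangle inequality $d_{\mathbbm{H}^N}(\xi)\le c\,(d_{\mathbbm{H}^N}(\eta,\xi)+d_{\mathbbm{H}^N}(\eta))$ that the paper itself uses in the proof of Lemma \ref{Lem Q2} works verbatim after replacing $\rho/2$ by $\rho/(2c)$, as you anticipate; and your constant also depends on $s$, which is consistent with the statement since $s\in(0,1)$ is fixed throughout the paper.
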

	\begin{proof}
		We recall that the Haar measure on $\mathbbm{H}^N$ is translation invariant, see, for instance  \cite[Section 2.3]{Gre}. Making use of the polar coordinates (see, e.g. \cite[Proposition 1.15]{Folland}), we observe that
		\begin{align}
			\int_{\mathbbm{H}^N}\frac{1}{{d_{\mathbbm{H}^N}(\eta)^{Q-2s}(1+d_{\mathbbm{H}^N}(\eta)^{Q+\delta})}}d\eta&=\int_0^\infty \frac{r^{Q-1}}{r^{Q-2s}(1+r^{Q+\delta})}dr\\
			&= \int_0^\infty \frac{r^{-1+2s}}{(1+r^{Q+\delta})}dr\\
			&\leq C_1,
		\end{align}
		since $0<s<1$ and $Q\geq 3.$
	\end{proof}
	
	Using similar arguments as in Lemma \ref{Lem Q1}, we have the following results:
	\begin{lem}\label{Lem Q5}
		For any $\delta>0,$ there exists a constant $C_2=C_2(Q,\delta)>0$ such that
		\begin{align}
			\sup_{\xi\in\mathbbm{H}^N}\int_{\mathbbm{H}^N}\frac{1}{{d_{\mathbbm{H}^N}(\eta,\xi)^{Q-2s}(1+d_{\mathbbm{H}^N}(\eta)^{2s+\delta})}}d\eta\leq C_2.		
		\end{align}
	\end{lem}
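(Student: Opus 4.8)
The plan is to mirror the proof of Lemma \ref{Lem Q1}, exploiting the translation invariance of the Haar measure together with the polar-coordinate integration formula on $\mathbbm{H}^N$. First I would observe that by left-translation invariance of the Haar measure and the definition $d_{\mathbbm{H}^N}(\eta,\xi)=\|\xi^{-1}\circ\eta\|_{\mathbbm{H}^N}$, the substitution $\eta\mapsto \xi\circ\eta$ reduces the supremum over $\xi$ to a single integral; however, unlike in Lemma \ref{Lem Q1}, the two factors in the integrand are centered at different points ($\eta$ versus $\eta\circ\xi^{-1}$ or similar), so the reduction is not quite to the centered integral. The cleanest route is therefore to bound the supremum directly: for each fixed $\xi$, after translating so that the singularity $d_{\mathbbm{H}^N}(\eta,\xi)^{Q-2s}$ becomes centered at the origin, I estimate the remaining weight $(1+d_{\mathbbm{H}^N}(\eta)^{2s+\delta})^{-1}$ uniformly in $\xi$ by splitting the domain of integration.

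Concretely, I would split $\mathbbm{H}^N$ into the unit Kor\'anyi ball $B_1^{\mathbbm{H}^N}(\xi)$ around $\xi$ and its complement. On the ball, the weight $(1+d_{\mathbbm{H}^N}(\eta)^{2s+\delta})^{-1}\le 1$ is harmless, and the integral $\int_{B_1} d_{\mathbbm{H}^N}(\eta,\xi)^{-(Q-2s)}\,d\eta$ converges by polar coordinates since the radial integrand behaves like $r^{-1+2s}$ near the origin, which is integrable as $s>0$. On the complement, the singular kernel $d_{\mathbbm{H}^N}(\eta,\xi)^{-(Q-2s)}$ is bounded away from $\eta=\xi$, and I would control the tail using the pseudo-triangle (quasi-metric) inequality for the Kor\'anyi norm to relate $d_{\mathbbm{H}^N}(\eta)$ and $d_{\mathbbm{H}^N}(\eta,\xi)$, after which the decay of $(1+d_{\mathbbm{H}^N}(\eta)^{2s+\delta})^{-1}$ guarantees integrability at infinity. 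The key point is that in Lemma \ref{Lem Q5} the exponent in the weight is only $2s+\delta$ rather than $Q+\delta$, so convergence at infinity hinges on a more delicate balancing: after polar coordinates the tail behaves like $\int^\infty r^{Q-1}\cdot r^{-(Q-2s)}\cdot r^{-(2s+\delta)}\,dr=\int^\infty r^{-1-\delta}\,dr$, which converges precisely because $\delta>0$.

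The main obstacle is handling the interaction of the two distinct centers — the singularity at $\xi$ and the decay weight centered at the origin — uniformly in $\xi$. Unlike Lemma \ref{Lem Q1}, one cannot simply translate both factors simultaneously, so the quasi-triangle inequality $d_{\mathbbm{H}^N}(\eta)\le c_Q\big(d_{\mathbbm{H}^N}(\eta,\xi)+d_{\mathbbm{H}^N}(\xi)\big)$ must be used carefully, ideally by separately treating the regions where $d_{\mathbbm{H}^N}(\eta,\xi)$ is comparable to, larger than, or smaller than $d_{\mathbbm{H}^N}(\xi)$. I would argue that the worst case is $\xi$ near the origin (where both centers coincide and the estimate reduces essentially to the centered computation), and that for $\|\xi\|_{\mathbbm{H}^N}$ large the decay weight only improves the bound, so the supremum is in fact attained up to a constant by the centered integral. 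This yields a finite bound $C_2=C_2(Q,\delta)$ and completes the proof, with the constant depending only on $Q$ and $\delta$ through the convergent radial integrals and the quasi-metric constant $c_Q$.
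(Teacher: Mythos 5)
Your proposal is correct, and at its core it rests on the same radial computation the paper uses: after polar coordinates the integrand behaves like $r^{2s-1}$ near the origin (integrable since $s>0$) and like $r^{Q-1}\cdot r^{-(Q-2s)}\cdot r^{-(2s+\delta)}=r^{-1-\delta}$ at infinity (integrable since $\delta>0$), i.e. $\int_0^\infty r^{2s-1}(1+r^{2s+\delta})^{-1}\,dr<\infty$. The difference lies in how the supremum over $\xi$ is reduced to this centered integral. The paper proves Lemma \ref{Lem Q1} by invoking translation invariance of the Haar measure and then computing the integral with \emph{both} factors centered at the origin, and simply declares that Lemma \ref{Lem Q5} follows by ``similar arguments''; it never addresses the fact that the kernel $d_{\mathbbm{H}^N}(\eta,\xi)^{-(Q-2s)}$ and the weight $(1+d_{\mathbbm{H}^N}(\eta)^{2s+\delta})^{-1}$ have different centers, so the translation trick, read literally, only verifies the case $\xi=0$. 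You identify exactly this point and patch it with a region decomposition and the quasi-triangle inequality: near the singularity the weight is at most $1$ and the kernel is locally integrable uniformly in $\xi$; where $d_{\mathbbm{H}^N}(\eta)$ is comparable to or larger than $d_{\mathbbm{H}^N}(\eta,\xi)$ the weight may be replaced by $(1+d_{\mathbbm{H}^N}(\eta,\xi)^{2s+\delta})^{-1}$ and translation invariance applies; and in the remaining regime ($\eta$ much closer to the origin than to $\xi$) one gets $d_{\mathbbm{H}^N}(\eta,\xi)\gtrsim d_{\mathbbm{H}^N}(\xi)$, so that region contributes $O(d_{\mathbbm{H}^N}(\xi)^{-\delta})$, confirming your claim that large $\|\xi\|_{\mathbbm{H}^N}$ only improves the bound. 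In short, your argument is a more careful version of the paper's: it costs a case analysis, but it actually delivers the uniformity in $\xi$ that the paper's one-line reduction glosses over, and you correctly flag the genuinely delicate feature of this lemma compared with Lemma \ref{Lem Q1}, namely that integrability at infinity requires the decay of \emph{both} factors rather than the weight alone.
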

	
	\begin{lem}\label{Lem Q3}
		For any $\delta>0,$ there exists a constant $C_3=C_3(Q,\delta)>0$ such that
		\begin{align}
			\sup_{\xi\in\mathbbm{H}^N}\int_{\mathbbm{H}^N}\frac{d_{\mathbbm{H}^N}(\eta)^{Q-1}}{(1+d_{\mathbbm{H}^N}(\eta)^{Q+s+\delta})d_{\mathbbm{H}^N}(\eta,\xi)^{Q+2s}}d\eta\leq C_3.		
		\end{align}
	\end{lem}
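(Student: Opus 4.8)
The plan is to follow the template of Lemma \ref{Lem Q1}, reducing this two-centred integral to a one-dimensional radial integral and reading off boundedness from the exponents. First I would normalise the position of the singular kernel using the left-invariance of the Haar measure (see \cite[Section 2.3]{Gre}): substituting $\eta=\xi\circ\zeta$ gives $d_{\mathbbm{H}^N}(\eta,\xi)=\|\xi^{-1}\circ\eta\|_{\mathbbm{H}^N}=\|\zeta\|_{\mathbbm{H}^N}$ (recall \eqref{diss} and \eqref{norm}), while $d_{\mathbbm{H}^N}(\eta)=\|\xi\circ\zeta\|_{\mathbbm{H}^N}$. This moves the singular factor $d_{\mathbbm{H}^N}(\eta,\xi)^{-(Q+2s)}$ to the origin in the $\zeta$ variable and transfers all of the $\xi$-dependence into the decay weight $1+\|\xi\circ\zeta\|_{\mathbbm{H}^N}^{Q+s+\delta}$ and the numerator $\|\xi\circ\zeta\|_{\mathbbm{H}^N}^{Q-1}$. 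When $\xi=\mathbf 0$ the integrand becomes purely radial, and the polar-coordinate formula on $\mathbbm{H}^N$ (\cite[Proposition 1.15]{Folland}), exactly as in the computation following Lemma \ref{Lem Q1}, yields
\begin{align}
\int_0^\infty \frac{r^{Q-1}\cdot r^{Q-1}}{r^{Q+2s}\,(1+r^{Q+s+\delta})}\,dr=\int_0^\infty\frac{r^{Q-2-2s}}{1+r^{Q+s+\delta}}\,dr.
\end{align}

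I would then verify integrability of this radial integral separately at the two ends. Near $r=0$ the integrand behaves like $r^{Q-2-2s}$, which is integrable because $Q=2N+2\geq4$ and $2s<2$ force $Q-2-2s>-1$; here the numerator power $Q-1$ together with the measure factor $r^{Q-1}$ is precisely what absorbs the kernel singularity $r^{-(Q+2s)}$. Near $r=\infty$ the extra decay $r^{-(Q+s+\delta)}$ guarantees convergence for every $\delta>0$. For general $\xi\neq\mathbf 0$ I would control the $\xi$-dependent factors by the quasi-triangle inequality for the Korányi norm, $\|\xi\circ\zeta\|_{\mathbbm{H}^N}\leq C\big(\|\xi\|_{\mathbbm{H}^N}+\|\zeta\|_{\mathbbm{H}^N}\big)$, and split $\mathbbm{H}^N$ into the region where $\|\zeta\|_{\mathbbm{H}^N}\leq\tfrac12\|\xi\|_{\mathbbm{H}^N}$ (so $\|\xi\circ\zeta\|_{\mathbbm{H}^N}$ is comparable to $\|\xi\|_{\mathbbm{H}^N}$) and its complement (where $\|\xi\circ\zeta\|_{\mathbbm{H}^N}$ is comparable to $\|\zeta\|_{\mathbbm{H}^N}$); on each piece the integrand is dominated by a single-variable radial expression whose integral is of the type just analysed.

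The main obstacle is the strength of the kernel singularity: the exponent $Q+2s$ exceeds the homogeneous dimension $Q$, so $d_{\mathbbm{H}^N}(\eta,\xi)^{-(Q+2s)}$ fails to be locally integrable on its own, and the entire estimate rests on the numerator $d_{\mathbbm{H}^N}(\eta)^{Q-1}$ supplying enough vanishing to compensate. Making this compensation quantitative and, crucially, \emph{uniform in} $\xi$ is the delicate point, since the compensating numerator is centred at the origin while the singular kernel is centred at $\xi$, and these two centres separate as $\|\xi\|_{\mathbbm{H}^N}\to\infty$. The region decomposition above is designed precisely to decouple the two centres so that in each piece a single radial variable governs both the singularity and the decay; checking that the resulting one-dimensional integrals are bounded by a constant $C_3=C_3(Q,\delta)$ independent of $\xi$ is then the heart of the argument, after which the remaining exponent bookkeeping is routine and mirrors Lemma \ref{Lem Q1}.
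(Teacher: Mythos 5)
Your outline correctly isolates the delicate point --- the kernel exponent $Q+2s$ exceeds the homogeneous dimension, and the compensating numerator $d_{\mathbbm{H}^N}(\eta)^{Q-1}$ is centred at the origin while the singularity sits at $\xi$ --- but the step you then defer to (``checking that the resulting one-dimensional integrals are bounded \dots independent of $\xi$'') cannot be carried out, because it is false. Your radial computation proves finiteness only at the single point $\xi=\mathbf{0}$, where the two centres coincide and the numerator genuinely cancels the singularity. For $\xi\neq\mathbf{0}$ the integral is actually infinite: restrict it to a small Kor\'anyi ball $B^{\mathbbm{H}^N}_{\varepsilon d_{\mathbbm{H}^N}(\xi)}(\xi)$ with $\varepsilon$ small; there the quasi-triangle inequality gives $d_{\mathbbm{H}^N}(\eta)\asymp d_{\mathbbm{H}^N}(\xi)$, so the numerator is bounded \emph{away from zero} (comparable to $d_{\mathbbm{H}^N}(\xi)^{Q-1}$) and the weight $1+d_{\mathbbm{H}^N}(\eta)^{Q+s+\delta}$ is bounded above, and what remains is, up to a positive constant depending on $\xi$,
\begin{align}
\int_{B^{\mathbbm{H}^N}_{\varepsilon d_{\mathbbm{H}^N}(\xi)}(\xi)}\frac{d\eta}{d_{\mathbbm{H}^N}(\eta,\xi)^{Q+2s}}
=C\int_0^{\varepsilon d_{\mathbbm{H}^N}(\xi)}r^{Q-1}\,r^{-(Q+2s)}\,dr
=C\int_0^{\varepsilon d_{\mathbbm{H}^N}(\xi)}r^{-1-2s}\,dr=+\infty.
\end{align}
So the supremum in the lemma is $+\infty$, no region decomposition can rescue the argument, and the assertion you label ``the heart of the argument'' is precisely where any attempted proof must break. (A secondary flaw of the same kind: on the complementary region $\|\zeta\|_{\mathbbm{H}^N}>\tfrac12\|\xi\|_{\mathbbm{H}^N}$, the claim $\|\xi\circ\zeta\|_{\mathbbm{H}^N}\asymp\|\zeta\|_{\mathbbm{H}^N}$ fails, e.g.\ for $\zeta$ near $\xi^{-1}$.)

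For comparison: the paper does not prove this lemma at all --- Lemmas \ref{Lem Q5}, \ref{Lem Q3} and \ref{Lem Q4} are asserted to follow by ``similar arguments'' to Lemma \ref{Lem Q1} --- and the similarity breaks exactly at the point you identified, since the kernel $d_{\mathbbm{H}^N}(\eta,\xi)^{-(Q-2s)}$ of Lemma \ref{Lem Q1} is locally integrable while $d_{\mathbbm{H}^N}(\eta,\xi)^{-(Q+2s)}$ is not. A statement of this shape becomes true (and your scheme of polar coordinates plus a two-region splitting would then work) only if the singular factor is tamed, e.g.\ replaced by $\min\{1,d_{\mathbbm{H}^N}(\eta,\xi)^{-(Q+2s)}\}$ or $(1+d_{\mathbbm{H}^N}(\eta,\xi))^{-(Q+2s)}$ --- which is what the heat kernel bound \eqref{Esti} actually supplies if one keeps the minimum instead of discarding it --- or if the numerator is recentred at $\xi$, i.e.\ replaced by $d_{\mathbbm{H}^N}(\eta,\xi)^{Q-1}$. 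As it stands, both your proposal and the paper's appeal to this lemma (in the estimate \eqref{cd} of Theorem \ref{Thm Glo2}) rest on a divergent integral, so the correct conclusion of your analysis should have been that the statement needs to be repaired, not that the bookkeeping is routine.
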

	
	\begin{lem}\label{Lem Q4}
		For any $\delta>0,$ there exists a constant $C_4=C_4(Q,\delta)>0$ such that
		\begin{align}
			\sup_{\xi\in\mathbbm{H}^N}\int_{\mathbbm{H}^N}\frac{d_{\mathbbm{H}^N}(\eta)^{Q-1}}{(1+d_{\mathbbm{H}^N}(\eta)^{2Q-s+\delta})d_{\mathbbm{H}^N}(\eta,\xi)^{Q+2s}}d\eta\leq C_4.		
		\end{align}
	\end{lem}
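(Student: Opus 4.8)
The plan is to mirror the proof of Lemma \ref{Lem Q1} (and Lemma \ref{Lem Q5}), reducing the two-variable integral to a one-dimensional radial integral by exploiting the translation invariance of the Haar measure and the polar coordinate formula on $\mathbbm{H}^N$. A preliminary observation simplifies matters: the denominator here differs from that of Lemma \ref{Lem Q3} only through the weight exponent $2Q-s+\delta$ in place of $Q+s+\delta$, and since $2Q-s+\delta\ge Q+s+\delta$ (this is $Q-2s\ge 0$, which holds because $Q\ge 3$ and $s<1$), the factor $(1+d_{\mathbbm{H}^N}(\eta)^{2Q-s+\delta})^{-1}$ decays at infinity at least as fast as its counterpart in Lemma \ref{Lem Q3}. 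Thus the far-field behaviour is even more favourable than in the previous lemma, and it suffices to reproduce the scheme of Lemma \ref{Lem Q1} with the new exponents.

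Concretely, I would split $\mathbbm{H}^N$ into the near-diagonal region $E_1=\{\eta:d_{\mathbbm{H}^N}(\eta,\xi)<\tfrac12 d_{\mathbbm{H}^N}(\eta)\}$ and its complement $E_2$. On $E_2$ one has $d_{\mathbbm{H}^N}(\eta,\xi)\gtrsim d_{\mathbbm{H}^N}(\eta)$, so the $\xi$-dependence drops out and the integrand is dominated by $d_{\mathbbm{H}^N}(\eta)^{Q-1}\big[(1+d_{\mathbbm{H}^N}(\eta)^{2Q-s+\delta})\,d_{\mathbbm{H}^N}(\eta)^{Q+2s}\big]^{-1}$; passing to polar coordinates exactly as in Lemma \ref{Lem Q1} yields
\begin{align}
\int_0^\infty\frac{r^{Q-1}\cdot r^{Q-1}}{(1+r^{2Q-s+\delta})\,r^{Q+2s}}\,dr=\int_0^\infty\frac{r^{Q-2-2s}}{1+r^{2Q-s+\delta}}\,dr,
\end{align}
which converges because the integrand behaves like $r^{Q-2-2s}$ near $r=0$ (integrable since $Q-1>2s$, using $Q\ge 3$ and $s<1$) and like $r^{-Q-2-s-\delta}$ as $r\to\infty$ (integrable for every $\delta>0$). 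On $E_1$ the quasi-triangle inequality for the Kor\'anyi norm forces $d_{\mathbbm{H}^N}(\eta)\approx d_{\mathbbm{H}^N}(\xi)$, so the slowly varying weight $d_{\mathbbm{H}^N}(\eta)^{Q-1}(1+d_{\mathbbm{H}^N}(\eta)^{2Q-s+\delta})^{-1}$ may be frozen at $\xi$ and pulled out of the integral, reducing the $E_1$ contribution to a constant multiple of a purely kernel-type integral over a neighbourhood of the diagonal. Combining the two estimates should furnish a bound $C_4=C_4(Q,\delta)$ that is independent of $\xi$, as asserted.

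The step I expect to be most delicate is the uniform control of the near-diagonal region $E_1$. Unlike the situation in Lemma \ref{Lem Q1}, where the exponent $Q-2s<Q$ renders the kernel locally integrable, here the factor $d_{\mathbbm{H}^N}(\eta,\xi)^{-(Q+2s)}$ carries a strong singularity with $Q+2s>Q$, so the frozen-weight reduction alone is not enough and the diagonal contribution must be handled through the symmetric second-difference structure of the fractional sub-Laplacian kernel in \eqref{frac} (equivalently, a principal-value interpretation), which supplies the cancellation absorbing the singularity. Once this near-diagonal cancellation is in place, the remaining endpoint checks at $r=0$ and $r=\infty$ are routine given $0<s<1$, $\delta>0$, and $Q\ge 3$, and the argument closes in parallel with Lemmas \ref{Lem Q1}--\ref{Lem Q3}.
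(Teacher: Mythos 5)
Your far-field analysis is, in effect, the paper's entire proof: the paper disposes of Lemma \ref{Lem Q4} with the words ``using similar arguments as in Lemma \ref{Lem Q1},'' i.e.\ the polar-coordinate computation, which amounts to evaluating the integral at $\xi=0$, where it becomes $\int_0^\infty r^{Q-2-2s}(1+r^{2Q-s+\delta})^{-1}\,dr<\infty$ --- exactly the one-dimensional integral you obtain on $E_2$. The genuinely valuable part of your proposal is that you noticed what this glosses over: for $\xi\neq 0$ the factor $d_{\mathbbm{H}^N}(\eta,\xi)^{-(Q+2s)}$ is not locally integrable, so the near-diagonal region cannot be dismissed.

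However, your proposed resolution of that difficulty cannot work, and in fact nothing can: the statement as written is false for every $\xi\neq 0$, so the ``delicate step'' is not a step but an obstruction. The integrand is pointwise nonnegative, hence there is no cancellation to extract; the symmetric second-difference/principal-value mechanism you invoke belongs to the definition \eqref{frac} of $(-\Delta_{\mathbbm{H}^N})^s$ acting on a smooth function, where the factor $u(\xi\circ\eta)+u(\xi\circ\eta^{-1})-2u(\xi)$ vanishes to second order and tames the kernel; here the kernel is integrated against a fixed positive weight and no such factor is present. Concretely, fix $\xi\neq 0$ and $\varepsilon>0$ small; on the Kor\'anyi ball $B^{\mathbbm{H}^N}_\varepsilon(\xi)$ the weight $d_{\mathbbm{H}^N}(\eta)^{Q-1}/(1+d_{\mathbbm{H}^N}(\eta)^{2Q-s+\delta})$ is bounded below by a constant $m(\xi)>0$, so translation invariance and polar coordinates give
\begin{align}
\int_{\mathbbm{H}^N}\frac{d_{\mathbbm{H}^N}(\eta)^{Q-1}}{(1+d_{\mathbbm{H}^N}(\eta)^{2Q-s+\delta})d_{\mathbbm{H}^N}(\eta,\xi)^{Q+2s}}d\eta\;\geq\; m(\xi)\int_{B^{\mathbbm{H}^N}_\varepsilon(\xi)}\frac{d\eta}{d_{\mathbbm{H}^N}(\eta,\xi)^{Q+2s}}\;=\;m(\xi)\,\sigma_Q\int_0^\varepsilon r^{-1-2s}\,dr\;=\;+\infty,
\end{align}
where $\sigma_Q$ is the measure of the unit Kor\'anyi sphere. (The same remark applies to Lemma \ref{Lem Q3}.) What is true, and what the application in the proof of Theorem \ref{Thm Glo2} actually requires (see the region appearing in \eqref{cd} and \eqref{eaw2}), is the bound for the integral restricted to $\{d_{\mathbbm{H}^N}(\eta)\leq d_{\mathbbm{H}^N}(\xi)/(2c)\}$: there the quasi-triangle inequality $d_{\mathbbm{H}^N}(\xi)\leq c(d_{\mathbbm{H}^N}(\eta,\xi)+d_{\mathbbm{H}^N}(\eta))$ forces $d_{\mathbbm{H}^N}(\eta,\xi)\geq d_{\mathbbm{H}^N}(\xi)/(2c)$, so the singularity never enters, and the uniform bound follows from your $E_2$-type estimates together with Lemma \ref{Lem Q}. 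Alternatively one must retain the branch $h_s(t,\cdot)\leq ct^{-Q/(2s)}$ of \eqref{Esti} near the diagonal rather than bounding the heat kernel by $d_{\mathbbm{H}^N}(\eta,\xi)^{-(Q+2s)}$ everywhere. With such a reformulation your argument closes; without it, no argument can.
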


	\begin{lem}\label{Lem Q2}
		For any $\delta>0,$ there exists a constant $C_5>0$ depending only on $Q$ and $\delta$ such that
		\begin{align}
			\int_{\mathbbm{H}^N}\frac{1}{{d_{\mathbbm{H}^N}(\eta,\xi)^{Q-2s}(1+d_{\mathbbm{H}^N}(\eta)^{Q+\delta})}}d\eta\leq \frac{C_5}{1+d_{\mathbbm{H}^N}(\xi)^{Q-2s}}\,\, \forall \xi\in\mathbbm{H}^N.
		\end{align}
	\end{lem}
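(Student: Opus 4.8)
The plan is to localize the integral according to the size of $d_{\mathbbm{H}^N}(\xi)$, and when this quantity is large, to split $\mathbbm{H}^N$ into the region close to $\xi$ and its complement. Write $I(\xi)$ for the left-hand side and set $R:=d_{\mathbbm{H}^N}(\xi)=\|\xi\|_{\mathbbm{H}^N}$. When $R\leq 1$ there is nothing to prove beyond Lemma \ref{Lem Q1}: the factor $1+R^{Q-2s}$ on the right is then bounded above by $2$, so the uniform bound $I(\xi)\leq C_1$ already yields $I(\xi)\leq 2C_1/(1+R^{Q-2s})$. Hence I may assume $R\geq 1$, in which case $1+R^{Q-2s}\leq 2R^{Q-2s}$, and it suffices to produce the pure decay estimate $I(\xi)\leq C\,R^{-(Q-2s)}$ with $C$ depending only on $Q$ and $\delta$.

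So, assuming $R\geq 1$, I would decompose $\mathbbm{H}^N=A\cup A^c$ with $A:=\{\eta:d_{\mathbbm{H}^N}(\eta,\xi)\leq R/2\}$. On $A$ the triangle inequality for the Kor\'anyi distance gives $d_{\mathbbm{H}^N}(\eta)\geq d_{\mathbbm{H}^N}(\xi)-d_{\mathbbm{H}^N}(\eta,\xi)\geq R/2$, so that $1+d_{\mathbbm{H}^N}(\eta)^{Q+\delta}\geq (R/2)^{Q+\delta}$; pulling this factor out and computing the remaining integral in polar coordinates (using translation invariance of the Haar measure, exactly as in Lemma \ref{Lem Q1}) yields
\begin{align}
\int_A \frac{d\eta}{d_{\mathbbm{H}^N}(\eta,\xi)^{Q-2s}\big(1+d_{\mathbbm{H}^N}(\eta)^{Q+\delta}\big)}\leq \frac{2^{Q+\delta}}{R^{Q+\delta}}\int_0^{R/2} r^{2s-1}\,dr = \frac{C\,R^{2s}}{R^{Q+\delta}},
\end{align}
which is $\leq C\,R^{-(Q-2s)}$ precisely because $\delta>0$. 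On $A^c$ one has $d_{\mathbbm{H}^N}(\eta,\xi)>R/2$, hence $d_{\mathbbm{H}^N}(\eta,\xi)^{Q-2s}>(R/2)^{Q-2s}$ (note $Q-2s>0$ since $Q\geq 4$ and $s<1$); pulling this out leaves a globally integrable weight, so that
\begin{align}
\int_{A^c} \frac{d\eta}{d_{\mathbbm{H}^N}(\eta,\xi)^{Q-2s}\big(1+d_{\mathbbm{H}^N}(\eta)^{Q+\delta}\big)}\leq \frac{2^{Q-2s}}{R^{Q-2s}}\int_{\mathbbm{H}^N}\frac{d\eta}{1+d_{\mathbbm{H}^N}(\eta)^{Q+\delta}},
\end{align}
where the last integral converges by the polar-coordinate computation of Lemma \ref{Lem Q1} (the relevant exponent being $Q+\delta-(Q-1)=1+\delta>1$). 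Summing the two contributions gives $I(\xi)\leq C\,R^{-(Q-2s)}$, and combining with the case $R\leq 1$ produces the claim with $C_5$ the larger of the two resulting constants.

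The individual estimates are elementary once the splitting is fixed, so the only genuine point requiring care is the geometric input: I must use that the homogeneous (Kor\'anyi) norm obeys a triangle-type inequality in order to transfer the lower bound between $d_{\mathbbm{H}^N}(\eta,\xi)$ and $d_{\mathbbm{H}^N}(\eta)$ on each of the two regions, and I must verify that the constant is genuinely independent of $\xi$. The latter is guaranteed by translation invariance of the Haar measure, which makes the near-$\xi$ integral $\int_{d_{\mathbbm{H}^N}(\eta,\xi)\leq R/2} d_{\mathbbm{H}^N}(\eta,\xi)^{-(Q-2s)}\,d\eta$ a function of $R$ alone. Should only the quasi-triangle inequality $d_{\mathbbm{H}^N}(\xi)\leq C_T\big(d_{\mathbbm{H}^N}(\eta,\xi)+d_{\mathbbm{H}^N}(\eta)\big)$ be available, the argument goes through verbatim after replacing the splitting radius $R/2$ by $R/(2C_T)$, which only alters the numerical value of $C_5$.
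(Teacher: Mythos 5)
Your proof is correct, but it organizes the splitting differently from the paper, and the comparison is instructive. Both arguments dispose of the case $d_{\mathbbm{H}^N}(\xi)\leq 1$ by quoting Lemma \ref{Lem Q1}, and both use a (quasi-)triangle inequality to split the remaining case; the paper even uses exactly the pseudo-triangle inequality $d_{\mathbbm{H}^N}(\xi)\leq c\,(d_{\mathbbm{H}^N}(\eta,\xi)+d_{\mathbbm{H}^N}(\eta))$ with threshold $d_{\mathbbm{H}^N}(\xi)/(2c)$ that you anticipate in your final remark. The difference is in the decomposition variable: the paper splits according to the size of $d_{\mathbbm{H}^N}(\eta)$ (near versus far from the origin), whereas you split according to $d_{\mathbbm{H}^N}(\eta,\xi)$ (near versus far from the singularity at $\xi$). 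On the paper's region $\{d_{\mathbbm{H}^N}(\eta)\geq d_{\mathbbm{H}^N}(\xi)/(2c)\}$ the weight is factored as $d_{\mathbbm{H}^N}(\eta)^{Q+\delta}\gtrsim d_{\mathbbm{H}^N}(\xi)^{Q-2s}\,d_{\mathbbm{H}^N}(\eta)^{2s+\delta}$, which pulls out the desired decay and reduces the remaining integral to the auxiliary Lemma \ref{Lem Q5}; your near-region estimate instead freezes the weight at $(R/2)^{-(Q+\delta)}$ and integrates the kernel singularity directly in polar coordinates, yielding the (even stronger) bound $C R^{2s-Q-\delta}$ there. The far regions are handled identically in both proofs (pull out $d_{\mathbbm{H}^N}(\eta,\xi)^{-(Q-2s)}$ and integrate the weight globally). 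What your route buys is self-containedness — you never need Lemma \ref{Lem Q5}, only polar coordinates and translation invariance — and an explicit decay exponent on the near-$\xi$ piece; what the paper's route buys is economy within its own framework, since Lemmas \ref{Lem Q1}--\ref{Lem Q5} are already established and reused across the proofs of the main theorems.
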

	\begin{proof}
		We split the proof into two parts. First, let $d_{\mathbbm{H}^N}(\xi)\geq 1.$ In this case, using the pseudo triangle inequality $d_{\mathbbm{H}^N}(\xi)\leq c(d_{\mathbbm{H}^N}(\eta,\xi)+d_{\mathbbm{H}^N}(\eta)),$ we deduce
		\begin{align}
			&\int_{\mathbbm{H}^N}\frac{1}{{d_{\mathbbm{H}^N}(\eta,\xi)^{Q-2s}(1+d_{\mathbbm{H}^N}(\eta)^{Q+\delta})}}d\eta\\
			&\qquad=\int_{\left\{d_{\mathbbm{H}^N}(\eta)\geq \frac{d_{\mathbbm{H}^N}(\xi)}{2c}\right\}}\frac{1}{{d_{\mathbbm{H}^N}(\eta,\xi)^{Q-2s}(1+d_{\mathbbm{H}^N}(\eta)^{Q+\delta})}}d\eta\\
			&\qquad\qquad\qquad\quad+\int_{\left\{d_{\mathbbm{H}^N}(\eta)\leq \frac{d_{\mathbbm{H}^N}(\xi)}{2c}\right\}}\frac{1}{{d_{\mathbbm{H}^N}(\eta,\xi)^{Q-2s}(1+d_{\mathbbm{H}^N}(\eta)^{Q+\delta})}}d\eta\\
			&\qquad\leq C\int_{\left\{d_{\mathbbm{H}^N}(\eta)\geq \frac{d_{\mathbbm{H}^N}(\xi)}{2c}\right\}}\frac{1}{{d_{\mathbbm{H}^N}(\eta,\xi)^{Q-2s}(1+d_{\mathbbm{H}^N}(\eta)^{2s+\delta})(1+d_{\mathbbm{H}^N}(\xi)^{Q-2s})}}d\eta\\
			&\quad\qquad\qquad\qquad+\int_{\left\{d_{\mathbbm{H}^N}(\eta)\leq \frac{d_{\mathbbm{H}^N}(\xi)}{2c}\right\}}\frac{1}{{d_{\mathbbm{H}^N}(\xi,\eta)^{Q-2s}(1+d_{\mathbbm{H}^N}(\eta)^{Q+\delta})}}d\eta.		
		\end{align}
		Further, utilizing Lemma \ref{Lem Q5} gives
		\begin{align}\label{r1}
			\int_{\mathbbm{H}^N}\frac{1}{{d_{\mathbbm{H}^N}(\eta,\xi)^{Q-2s}(1+d_{\mathbbm{H}^N}(\eta)^{Q+\delta})}}d\eta&\leq \frac{C_2}{(1+d_{\mathbbm{H}^N}(\xi)^{Q-2s})}+\frac{C}{d_{\mathbbm{H}^N}(\xi)^{Q-2s}}\\
			&\leq \frac{2C}{(1+d_{\mathbbm{H}^N}(\xi)^{Q-2s})},
		\end{align}
		for some constant $C>0.$ For $d_{\mathbbm{H}^N}(\xi)\leq 1,$ we have by Lemma \ref{Lem Q1} that
		\begin{align}\label{r2}
			\int_{\mathbbm{H}^N}\frac{1}{{d_{\mathbbm{H}^N}(\eta,\xi)^{Q-2s}(1+d_{\mathbbm{H}^N}(\eta)^{Q+\delta})}}d\eta\leq C_1,
		\end{align}
		for some constant $C_1>0,$ whence altogether we obtain the desired result. 
	\end{proof}

	Also, it is immediate to observe the following:
\begin{lem}\label{Lem Q}
	For any $\delta>0,$ there exists a constant $C_6=C_6(Q,\delta)$ such that
\begin{align}
		\int_{\mathbbm{H}^N}\frac{d_{\mathbbm{H}^N}(\eta)^{Q-1}}{1+d_{\mathbbm{H}^N}(\eta)^{2Q-s+\delta}}d\eta\leq C_6.
\end{align}
\end{lem}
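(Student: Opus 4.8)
The plan is to reduce this to a one-dimensional radial integral via the polar coordinate formula on $\mathbbm{H}^N$, exactly as in the proof of Lemma \ref{Lem Q1}, and then to certify finiteness by an elementary convergence check at the two endpoints $r=0$ and $r=\infty$. Since the integrand depends on $\eta$ only through $d_{\mathbbm{H}^N}(\eta)=\|\eta\|_{\mathbbm{H}^N}$, the problem is genuinely radial and no cancellation or geometric input beyond the homogeneity of the measure is needed.

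First I would invoke the translation invariance of the Haar measure together with the polar coordinate formula (see \cite[Proposition 1.15]{Folland}, and compare \cite[Section 2.3]{Gre}), which for a radial integrand reproduces the weight $r^{Q-1}\,dr$ up to the surface-measure constant of the unit Kor\'anyi sphere. Following the same convention as in Lemma \ref{Lem Q1}, this yields
\[
\int_{\mathbbm{H}^N}\frac{d_{\mathbbm{H}^N}(\eta)^{Q-1}}{1+d_{\mathbbm{H}^N}(\eta)^{2Q-s+\delta}}\,d\eta
= C\int_0^\infty \frac{r^{Q-1}}{1+r^{2Q-s+\delta}}\,r^{Q-1}\,dr
= C\int_0^\infty \frac{r^{2Q-2}}{1+r^{2Q-s+\delta}}\,dr,
\]
for a constant $C>0$ depending only on $Q$.

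Next I would verify integrability at the two endpoints. Near $r=0$ the integrand behaves like $r^{2Q-2}$, which is integrable because $2Q-2\geq 4>-1$ (recall $Q=2N+2\geq 4$). Near $r=\infty$ the integrand behaves like $r^{2Q-2-(2Q-s+\delta)}=r^{-2+s-\delta}$; since $s\in(0,1)$ and $\delta>0$ we have $-2+s-\delta<-1$, so the tail converges. Splitting the integral over $(0,1)$ and $(1,\infty)$ and bounding each piece by the corresponding convergent model integral then produces the finite constant $C_6=C_6(Q,\delta)$ asserted in the statement.

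There is essentially no obstacle here — the lemma is the elementary convergence statement already flagged as \enquote{immediate} — and the only point deserving a line of care is the tail bound at infinity, where both hypotheses enter: the factor $r^{-\delta}$ coming from $\delta>0$ and the bound $s<1$ together guarantee the decay exponent $-2+s-\delta$ stays strictly below $-1$.
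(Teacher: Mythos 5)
Your proof is correct and follows exactly the route the paper intends: the paper states this lemma as ``immediate'' after Lemma \ref{Lem Q1}, whose proof is precisely your polar-coordinates reduction to the radial integral $\int_0^\infty r^{2Q-2}/(1+r^{2Q-s+\delta})\,dr$ followed by the endpoint convergence check. Your tail exponent computation $-2+s-\delta<-1$ is right (in fact $s<1$ alone already suffices, with $\delta>0$ giving extra room), so there is nothing to correct.
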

    
	We are now ready to give the proof of Theorem
	\ref{Thm Glo2}.
	
	
	\noindent \textbf{Proof of Theorem \ref{Thm Glo2}.}
	We use a fixed point argument on the following integral equation:
	\begin{align}\label{y1}
		u(t)=e^{-t(-\Delta_{\mathbbm{H}^N})^s}u_0+\int_0^te^{-(t-\tau)(-\Delta_{\mathbbm{H}^N})^s}(|u(\tau)|^p+f)d\tau		
	\end{align}
	with $\|u_0\|_{L^\infty(\mathbbm{H}^N)}\leq \varepsilon$ for small enough $\varepsilon>0$ that we fix later. It is an easy observation that
	\begin{align}\label{d1}
		||u_1|^p-|u_2|^p|\leq C|u_1-v_1|\left(|u_1|^{p-1}+|v_1|^{p-1}\right),
	\end{align}
	for some positive constant $C$ and $p> \frac{Q}{Q-2s}.$
	In order to employ the fixed point arguments, we first define a suitable complete metric space given as follows:
	\begin{align}\label{psc}
		\Theta^s_M\coloneqq\bigg\{v\in C([0,\infty),C_c(\mathbbm{H}^N)):0\leq v(t,\xi)\leq \frac{M}{1+d_{\mathbbm{H}^N}(\xi)^{Q-2s}} \text{ for }t\in [0,\infty),\, \xi\in\mathbbm{H}^N \bigg\},
	\end{align}
	where $$\|v\|_{\Theta^s_T}=\|v\|_{L^\infty((0,T),C_c(\mathbbm{H}^N))}$$ and $M>0$ is a constant. 
	We recall \cite{Talwar}, where the authors defined a similar space to study the Fujita theory for sub-elliptic heat equation in a stratified Lie group setting. Now, for any $u\in \Theta^s_M,$ we define the following map:
	\begin{align}\label{Phii1}
		\Phi_M(u)(t)\coloneqq e^{-t(-\Delta_{\mathbbm{H}^N})^s}u_0+\int_0^te^{-(t-\tau)(-\Delta_{\mathbbm{H}^N})^s}\big(|u(\tau)|^p+f\big)d\tau.
	\end{align}
	We aim to show that $\Phi_M$ is a contraction mapping for some $M>0.$ For this, in view of the fact that $p>\frac{Q}{Q-2s},$ we fix 
	\begin{align}\label{choice f}
		u_0(\xi)=\frac{\varepsilon d_{\mathbbm{H}^N}(\xi)^{Q-1}}{1+d_{\mathbbm{H}^N}(\xi)^{2Q-s+\delta}}\, \text{ and } f(\xi)=\frac{\varepsilon}{1+d_{\mathbbm{H}^N}(\xi)^{Q+\delta}},
	\end{align}
	where $\delta>0$ is chosen such that
	\begin{align}\label{delt}
		\frac{1}{(1+d_{\mathbbm{H}^N}(\xi))^{(Q-2s)p}}&\leq \frac{1}{(1+d_{\mathbbm{H}^N}(\xi))^{Q+\delta}}\\
		&\leq \frac{C}{(1+d_{\mathbbm{H}^N}(\xi)^{Q+\delta})},
	\end{align}
	for some positive constant $C$ independent of $\xi.$
	Firstly, using \eqref{Esti} and in view of Lemma \ref{Lem Q}, we have that
	\begin{align}\label{cd}
		\qquad e^{-t(-\Delta_{\mathbbm{H}^N})^s}u_0(\xi)&=\int_{\mathbbm{H}^N}h_s(t,\eta^{-1}\circ\xi)u_0(\eta)d\eta\\
		&=\int_{\mathbbm{H}^N}\frac{\varepsilon  h_s(t,\eta^{-1}\circ\xi)d_{\mathbbm{H}^N}(\eta)^{Q-1}}{1+d_{\mathbbm{H}^N}(\eta)^{2Q-s+\delta}}d\eta\\
		&\leq{\varepsilon C}\int_{\mathbbm{H}^N}\frac{d_{\mathbbm{H}^N}(\eta)^{Q-1}}{(1+d_{\mathbbm{H}^N}(\eta)^{2Q-s+\delta})d_{\mathbbm{H}^N}(\eta,\xi)^{Q+2s}}d\eta\\
		&={\varepsilon C}\int_{\left\{d_{\mathbbm{H}^N}(\eta)>\frac{d_{\mathbbm{H}^N}(\xi)}{2c}\right\}}\frac{d_{\mathbbm{H}^N}(\eta)^{Q-1}}{(1+d_{\mathbbm{H}^N}(\eta)^{2Q-s+\delta})d_{\mathbbm{H}^N}(\eta,\xi)^{Q+2s}}d\eta\\
		&\qquad+{\varepsilon C}\int_{\left\{d_{\mathbbm{H}^N}(\eta)\leq\frac{d_{\mathbbm{H}^N}(\xi)}{2c}\right\}}\frac{d_{\mathbbm{H}^N}(\eta)^{Q-1}}{(1+d_{\mathbbm{H}^N}(\eta)^{2Q-s+\delta})d_{\mathbbm{H}^N}(\eta,\xi)^{Q+2s}}d\eta\\
		&\leq {\varepsilon C}\int_{\left\{d_{\mathbbm{H}^N}(\eta)>\frac{d_{\mathbbm{H}^N}(\xi)}{2c}\right\}}\frac{d_{\mathbbm{H}^N}(\eta)^{Q-1}}{(1+d_{\mathbbm{H}^N}(\eta)^{Q-2s})(1+d_{\mathbbm{H}^N}(\eta)^{Q+s+\delta})d_{\mathbbm{H}^N}(\eta,\xi)^{Q+2s}}d\eta\\
		&\qquad+{\varepsilon C}\int_{\left\{d_{\mathbbm{H}^N}(\eta)\leq\frac{d_{\mathbbm{H}^N}(\xi)}{2c}\right\}}\frac{d_{\mathbbm{H}^N}(\eta)^{Q-1}}{(1+d_{\mathbbm{H}^N}(\eta)^{2Q-s+\delta})d_{\mathbbm{H}^N}(\eta,\xi)^{Q+2s}}d\eta\\ 
		&\leq {\varepsilon C}\frac{1}{1+{\left(\frac{d_{\mathbbm{H}^N}(\xi)}{2c}\right)}^{Q-2s}}\int_{\left\{d_{\mathbbm{H}^N}(\eta)>\frac{d_{\mathbbm{H}^N}(\xi)}{2c}\right\}}\frac{d_{\mathbbm{H}^N}(\eta)^{Q-1}}{(1+d_{\mathbbm{H}^N}(\eta)^{Q+s+\delta})d_{\mathbbm{H}^N}(\eta,\xi)^{Q+2s}}d\eta\\
		&\qquad+{\varepsilon C}\int_{\left\{d_{\mathbbm{H}^N}(\eta)\leq\frac{d_{\mathbbm{H}^N}(\xi)}{2c}\right\}}\frac{d_{\mathbbm{H}^N}(\eta)^{Q-1}}{(1+d_{\mathbbm{H}^N}(\eta)^{2Q-s+\delta})d_{\mathbbm{H}^N}(\eta,\xi)^{Q+2s}}d\eta.
	\end{align}
	Using Lemma \ref{Lem Q3} in \eqref{cd}, we deduce
	\begin{align}
		e^{-t(-\Delta_{\mathbbm{H}^N})^s}u_0(\xi)&\leq \frac{\varepsilon CC_3}{\left(1+{\left(\frac{d_{\mathbbm{H}^N}(\xi)}{2c}\right)}^{Q-2s}\right)}\\
		&\qquad\qquad+{\varepsilon C}\int_{\left\{d_{\mathbbm{H}^N}(\eta)\leq\frac{d(\xi)}{2c}\right\}}\frac{d_{\mathbbm{H}^N}(\eta)^{Q-1}}{(1+d_{\mathbbm{H}^N}(\eta)^{2Q-s+\delta})d_{\mathbbm{H}^N}(\eta,\xi)^{Q+2s}}d\eta
	\end{align}
	Now, to estimate $e^{-t(-\Delta_{\mathbbm{H}^N})^s}u_0(\xi),$ we proceed to do it in two cases. First, we consider that $d_{\mathbbm{H}^N}(\xi)>1,$ and observe
	\begin{align}
		e^{-t(-\Delta_{\mathbbm{H}^N})^s}u_0(\xi)&\leq {\varepsilon C}\frac{1}{1+\left({\frac{d_{\mathbbm{H}^N}(\xi)}{2c}}\right)^{Q-2s}}\\
		&\qquad\qquad\quad+{\varepsilon C}\int_{\left\{d_{\mathbbm{H}^N}(\eta)\leq\frac{d_{\mathbbm{H}^N}(\xi)}{2c}\right\}}\frac{d_{\mathbbm{H}^N}(\eta)^{Q-1}}{(1+d_{\mathbbm{H}^N}(\eta)^{2Q-s+\delta})d_{\mathbbm{H}^N}(\xi)^{Q+2s}}d\eta\\
		&\leq \frac{\varepsilon C}{1+{{d_{\mathbbm{H}^N}(\xi)}}^{Q-2s}}\\
		&\qquad\qquad\quad+\frac{\varepsilon C}{d_{\mathbbm{H}^N}(\xi)^{Q+2s}}\int_{\left\{d_{\mathbbm{H}^N}(\eta)\leq\frac{d_{\mathbbm{H}^N}(\xi)}{2c}\right\}}\frac{d_{\mathbbm{H}^N}(\eta)^{Q-1}}{(1+d_{\mathbbm{H}^N}(\eta)^{2Q-s+\delta})}d\eta,
	\end{align}
	for some constant $C$ independent of $\xi.$	
	Then, for some constant $C>0$ it infers
	\begin{align}
		e^{-t(-\Delta_{\mathbbm{H}^N})^s}u_0(\xi)&\leq\frac{\varepsilon C}{1+d_{\mathbbm{H}^N}(\xi)^{Q-2s}}+\frac{\varepsilon C}{d_{\mathbbm{H}^N}(\xi)^{Q+2s}}.
	\end{align}
	Now, again using the condition $d_{\mathbbm{H}^N}(\xi)>1$ above accords with the following estimate:
	\begin{align}\label{eaw1}
		e^{-t(-\Delta_{\mathbbm{H}^N})^s}u_0(\xi)&\leq\frac{\varepsilon C}{1+d_{\mathbbm{H}^N}(\xi)^{Q-2s}}+\frac{\varepsilon C}{d_{\mathbbm{H}^N}(\xi)^{Q+2s}}\\
		&\leq\frac{\varepsilon C}{1+d_{\mathbbm{H}^N}(\xi)^{Q-2s}}+\frac{\varepsilon C}{d_{\mathbbm{H}^N}(\xi)^{Q-2s}}\\
		&\leq \frac{2 C}{1+d_{\mathbbm{H}^N}(\xi)^{Q-2s}}.
	\end{align}
	Next, if $d_{\mathbbm{H}^N}(\xi)\leq 1,$ we consider a set
	\begin{align}
		S=\{\xi\in\mathbbm{H}^N:d_{\mathbbm{H}^N}(\xi)\leq 1\}.	
	\end{align}
	Clearly, $S$ is compact and hence the term
	\begin{align}
		\int_{\left\{d_{\mathbbm{H}^N}(\eta)\leq\frac{d_{\mathbbm{H}^N}(\xi)}{2c}\right\}}\frac{d_{\mathbbm{H}^N}(\eta)^{Q-1}}{(1+d_{\mathbbm{H}^N}(\eta)^{2Q-s+\delta})d_{\mathbbm{H}^N}(\eta,\xi)^{Q+2s}}d\eta
	\end{align}
	is bounded by some constant $C$ being continuous function of $\xi.$ This infers using Lemmas \ref{Lem Q3} and \ref{Lem Q4} that 
	\begin{align}\label{eaw2}
		e^{-t(-\Delta_{\mathbbm{H}^N})^s}u_0(\xi)&\leq  {\varepsilon C}\frac{1}{1+{\left(\frac{d_{\mathbbm{H}^N}(\xi)}{2c}\right)}^{Q-2s}}\int_{\left\{d_{\mathbbm{H}^N}(\eta)>\frac{d_{\mathbbm{H}^N}(\xi)}{2c}\right\}}\frac{d_{\mathbbm{H}^N}(\eta)^{Q-1}}{(1+d_{\mathbbm{H}^N}(\eta)^{Q+s+\delta})d_{\mathbbm{H}^N}(\eta,\xi)^{Q+2s}}d\eta\\
		&\qquad+{\varepsilon C}\int_{\left\{d_{\mathbbm{H}^N}(\eta)\leq\frac{d_{\mathbbm{H}^N}(\xi)}{2c}\right\}}\frac{d_{\mathbbm{H}^N}(\eta)^{Q-1}}{(1+d_{\mathbbm{H}^N}(\eta)^{2Q-s+\delta})d_{\mathbbm{H}^N}(\eta,\xi)^{Q+2s}}d\eta\\
		&\leq {\varepsilon C}\frac{C_3}{1+{\left(\frac{d_{\mathbbm{H}^N}(\xi)}{2c}\right)}^{Q-2s}}+{\varepsilon CC_4}\\
		&\leq {\varepsilon C}\frac{C_3}{1+{{d_{\mathbbm{H}^N}(\xi)}}^{Q-2s}}+\varepsilon CC_4\\
		&\leq C,
	\end{align}
	for some positive constant $C,$ as $d_{\mathbbm{H}^N}(\xi)\leq 1.$ Therefore, in view on the estimates \eqref{eaw1} and \eqref{eaw2}, we have that there exists a constant $C>0$ such that
	\begin{align}
		e^{-t(-\Delta_{\mathbbm{H}^N})^s}u_0(\xi)\leq \frac{2 C}{1+d_{\mathbbm{H}^N}(\xi)^{Q-2s}},\forall \xi\in\mathbbm{H}^N \text{ and }t\in [0,\infty).
	\end{align}
	Similarly, we next estimate the second term in \eqref{y1}, i.e., $\int_0^te^{-(t-\tau)(-\Delta_{\mathbbm{H}^N})^s}(|u(\tau)|^p+f)d\tau.$ 
	In light of \eqref{psc} and \eqref{choice f}, we have
	\begin{align}\label{ulin}
		\int_0^te^{-(t-\tau)(-\Delta_{\mathbbm{H}^N})^s}&(|u(\tau)|^p+f)(\xi)d\tau\\
		&\qquad=\int_0^t\int_{\mathbbm{H}^N}h_{s}(t-\tau,\eta^{-1}\circ\xi)(|u(\tau,\eta)|^p+f(\eta))d\eta d\tau\\
		&\qquad\leq  \int_0^t\int_{\mathbbm{H}^N}h_{s}(t-\tau,\eta^{-1}\circ\xi)\left(\frac{M^p}{(1+d_{\mathbbm{H}^N}(\eta)^{Q-2s})^p}+\frac{\varepsilon}{1+d_{\mathbbm{H}^N}(\eta)^{Q+\delta}}\right)d\eta d\tau.
	\end{align}
	Since $p>\frac{Q}{Q-2s},$ this further yields
	\begin{align}
		\int_0^te^{-(t-\tau)(-\Delta_{\mathbbm{H}^N})^s}(|u(\tau)|^p&+f)d\tau\\
		&\leq \int_0^t\int_{\mathbbm{H}^N}h_{s}(t-\tau,\eta^{-1}\circ\xi)\left(\frac{CM^p}{1+d_{\mathbbm{H}^N}(\eta)^{Q+\delta}}+\frac{\varepsilon}{1+d_{\mathbbm{H}^N}(\eta)^{Q+\delta}}\right)d\eta d\tau,	
	\end{align}
	where $\delta$ is the same constant appearing in \eqref{delt}. By the definition of the heat kernel $h_s(t,\xi),$ we have that
	\begin{align}
		\frac{\partial h_{s}(t, \eta^{-1}\circ\xi)}{\partial t}+(-\Delta_{\mathbbm{H}^N})^sh_{s}(t, \eta^{-1}\circ\xi)=0,
	\end{align}
	for all $\xi,\,\eta\in\mathbbm{H}^N.$ This further accords on integrating over the time interval $[0,T]$ that
	\begin{align}
		\int_0^T\bigg(\frac{\partial h_{s}(t, \eta^{-1}\circ\xi)}{\partial t}+(-\Delta_{\mathbbm{H}^N})^sh_{s}(t, \eta^{-1}\circ\xi)\bigg)dt=0\,\, \forall \xi,\,\eta\in\mathbbm{H}^N.
	\end{align}
	Now, taking the limit $T\To\infty$ and using the estimate \eqref{Esti} gives
	\begin{align}
		0&=\lim_{T\To\infty}\int_0^T (-\Delta_{\mathbbm{H}^N})^sh_{s}(t, \eta^{-1}\circ\xi)dt\\
		&=\lim_{T\To\infty}(-\Delta_{\mathbbm{H}^N})^s\int_0^T h_{s}(t, \eta^{-1}\circ\xi)dt.
	\end{align}
	Next, the non-negativity of $h_s$ infers
	\begin{align}\label{ul}
		\int_0^Th_{s}(T-t,\eta^{-1}\circ\xi) dt
		&\leq \lim_{T\To\infty}\int_0^Th_{s}(T-t,\eta^{-1}\circ\xi) dt\\ &=P_{2s}(\eta^{-1}\circ\xi)\\
		&\leq \frac{C}{\|\eta^{-1}\circ\xi\|_{\mathbbm{H}^N}^{Q-2s}}\\
		&=\frac{C}{d_{\mathbbm{H}^N}(\eta,\xi)^{Q-2s}},
	\end{align}
	for some positive constant independent of $\xi$ and $\eta.$ This follows in light of  \cite[Proposition 4.1]{Garofa} (also \cite[Remark 2.14 (ii)]{Corni}), where
	\begin{align}
		P_{2s}(\xi)\coloneqq \frac{1}{\Gamma (s)}\int_0^\infty t^{s-1}h(t,\xi)dt
	\end{align}
	is the fundamental solution of $-(-\Delta_{\mathbbm{H}^N})^s$ in $\mathbbm{H}^N\setminus\{0\}.$ Hence, we have that
	\begin{align}
		(-\Delta_{\mathbbm{H}^N})^sP_{2s}(\xi)=0 \text{ for any }\xi\in\mathbbm{H}^N\setminus\{0\}.
	\end{align}
	The details can be found in \cite[Section 2.4]{Corni}. We utilize the estimate \eqref{ul} in \eqref{ulin} to obtain
	\begin{align}\label{w1}
		\qquad\qquad\int_0^te^{-(t-\tau)(-\Delta_{\mathbbm{H}^N})^s}&(|u(\tau)|^p+f)d\tau\\
		&\quad\leq \int_0^t\int_{\mathbbm{H}^N}h_{s}(t-\tau,\eta^{-1}\circ\xi)\left(\frac{CM^p}{1+d_{\mathbbm{H}^N}(\eta)^{Q+\delta}}+\frac{\varepsilon}{1+d_{\mathbbm{H}^N}(\eta)^{Q+\delta}}\right)d\eta d\tau\\
		&\quad\leq \int_0^\infty\int_{\mathbbm{H}^N}h_{s}(t-\tau,\eta^{-1}\circ\xi)\left(\frac{CM^p}{1+d_{\mathbbm{H}^N}(\eta)^{Q+\delta}}+\frac{\varepsilon}{1+d_{\mathbbm{H}^N}(\eta)^{Q+\delta}}\right)d\eta d\tau\\
		&\quad\leq \int_{\mathbbm{H}^N}\frac{C}{d_{\mathbbm{H}^N}(\eta,\xi)^{Q-2s}}\left(\frac{{C}M^p}{1+d_{\mathbbm{H}^N}(\eta)^{Q+\delta}}+\frac{\varepsilon}{1+d_{\mathbbm{H}^N}(\eta)^{Q+\delta}}\right)d\eta\\
		&\quad=\int_{\mathbbm{H}^N}\frac{C}{d_{\mathbbm{H}^N}(\eta,\xi)^{Q-2s}}\left(\frac{CM^p+\varepsilon}{1+d_{\mathbbm{H}^N}(\eta)^{Q+\delta}}\right)d\eta\\
		&\quad\leq \int_{\mathbbm{H}^N}\frac{C(CM^p+\varepsilon)}{d_{\mathbbm{H}^N}(\eta,\xi)^{Q-2s}(1+d_{\mathbbm{H}^N}(\eta)^{Q+\delta})}d\eta\\
		&\quad\leq \frac{C(CM^p+\varepsilon)}{(1+d_{\mathbbm{H}^N}(\xi))^{Q-2s}},
	\end{align}
	for some positive constant $C>0.$ The last inequality in \eqref{w1} follows by Lemma \ref{Lem Q2}. Finally, we choose $M$ and $\varepsilon$ so that $\Phi_M$ is well defined on $\Theta_M^s.$   

Next, we aim to show that $\Phi_M$ is a contraction. Note that , in view of \eqref{delt}, we have
\begin{align}
	(Q-2s)(p-1)&=(Q-2s)p-Q+2s\\
	&>Q+\delta-Q+2s\\&=2s+\delta,
\end{align}
for some $\delta>0.$
Making use of \eqref{d1} and \eqref{ul}, we deduce
\begin{align}
	\|\Phi_M(u_1)(\xi)-\Phi_M(u_2)(\xi)\|_{{\Theta_M^s}}&=\bigg\|\int_0^te^{-(t-\tau)(-\Delta_{\mathbbm{H}^N})^s}\big(|u_1(\tau)|^p-|u_2(\tau)|^p\big)d\tau\bigg\|_{\Theta_M^s}\\
	&=\bigg\|\int_0^t\int_{\mathbbm{H}^N}h_s(\tau,\eta^{-1}\circ\xi)\big(|u_1(\tau)|^p-|u_2(\tau)|^p\big)(\eta)d\tau d\eta\bigg\|_{\Theta_M^s}\\
	&\leq 2^{p-1}C(p)\|u_1-u_2\|_{L^\infty(\mathbbm{H}^N)}\int_0^\infty\int_{\mathbbm{H}^N}h_{s}(t-\tau,\eta^{-1}\circ\xi)\frac{{M^{p-1}}}{{(1+d_{\mathbbm{H}^N}(\eta)^{Q-2s}})^{p-1}} d\eta\\
	&\leq 2^{p-1}C(p)\|u_1-u_2\|_{L^\infty(\mathbbm{H}^N)}\int_{\mathbbm{H}^N}\frac{{M^{p-1}}}{({1+d_{\mathbbm{H}^N}(\eta)^{Q-2s}})^{p-1}d_{\mathbbm{H}^N}(\eta,\xi)^{Q-2s}} d\eta\\
	&\leq 2^{p-1}C(p)\|u_1-u_2\|_{L^\infty(\mathbbm{H}^N)}\int_{\mathbbm{H}^N}\frac{{M^{p-1}}}{({1+d_{\mathbbm{H}^N}(\eta)^{(Q-2s){(p-1)}}})d_{\mathbbm{H}^N}(\eta,\xi)^{Q-2s}} d\eta\\
	&\leq 2^{p-1}C(p){M^{p-1}}\|u_1-u_2\|_{L^\infty(\mathbbm{H}^N)}\int_{\mathbbm{H}^N}\frac{1}{({1+d_{\mathbbm{H}^N}(\eta)^{2s+\delta}})d_{\mathbbm{H}^N}(\eta,\xi)^{Q-2s}} d\eta\\
	&\leq 2^{p-1}C_2C(p){M^{p-1}}\|u_1-u_2\|_{L^\infty(\mathbbm{H}^N)},
\end{align}
where the last inequality follows by Lemma \ref{Lem Q5}. This shows that $\Phi_M$ is a contraction from $\Theta_M^s$ to itself for suitably small $M.$ The Banach fixed theorem confirms the existence of a mild solution to \eqref{eq 0.1}. This concludes the proof.		
\qed\\

Subsequently, we prove the non-existence part, i.e., Theorems \ref{Thm Glo1} and \ref{Thm Glo3}.\\

\noindent \textbf{Proof of Theorem \ref{Thm Glo1}.}
We give a proof by the method of contradiction. Let, if it exists, $u$ be a global-in-time solution to \eqref{eq 0.1}. For a fixed \begin{align}\label{lle}
    l=1+\frac{p}{p-1},
\end{align}
we define a function $$\phi(\xi,t)=(\phi_1(\xi))^l(\phi_2(t))^l.$$ Here
\begin{align}
	\phi_1(\xi)=\varphi\left(\frac{\|\xi\|_{\mathbbm{H}^N}}{T^{\frac{1}{2}}}\right), \, \phi_2(t)=\varphi\left(\frac{t}{T^{s}}\right)
\end{align}
and $\varphi:\mathbb{R}^+\cup\{0\}\To\mathbb{R}$ is a smooth function defined as
\begin{align}
	\varphi(r)=\begin{cases}
		1 &\text{ for } r\in [0,1],\\
		\searrow &\text{ for } r\in [1,2],\\
		0  &\text{ for } r\in [2,\infty).
	\end{cases}
\end{align}
Now, by Definition \ref{Weak}, we have
\begin{align}\label{am}
	\int_0^\infty\int_{\mathbbm{H}^N}|u|^p\phi +\int_{\mathbbm{H}^N}u_0\phi(0,\xi)+\int_0^\infty\int_{\mathbbm{H}^N} f\phi&=-\int_0^\infty\int_{\mathbbm{H}^N}u(\partial_t\phi-(-\Delta_{\mathbbm{H}^N})^s\phi)\\
	&=-\int_{S}\int_{\mathbbm{H}^N}u(\partial_t\phi-(-\Delta_{\mathbbm{H}^N})^s\phi),
\end{align}
where $$S\coloneqq [0,2T^{s}).$$ As an application of \cite[Proposition 2.1]{Ahmad}, we have that 
\begin{align}\label{km}
	(-\Delta_{\mathbbm{H}^N})^s(\phi_1(\xi))^l\leq l(\phi_1(\xi))^{l-1}(-\Delta_{\mathbbm{H}^N})^s\phi_1(\xi).
\end{align}
Consider
\begin{align}
	B^{\mathbbm{H}^N}_{2\sqrt T}\coloneqq \{\xi\in\mathbbm{H}^N: \|\xi\|_{\mathbbm{H}^N}\leq 2\sqrt T\}.
\end{align}
This implies, in light of \eqref{am}, that
\begin{align}\label{ka}
	\qquad\quad\int_S\int_{B^{\mathbbm{H}^N}_{2\sqrt T}}|u|^p\phi +\int_{B^{\mathbbm{H}^N}_{2\sqrt T}}u_0\phi(0,\cdot)+\int_S\int_{B^{\mathbbm{H}^N}_{2\sqrt T}} f\phi
	&=\int_0^\infty\int_{\mathbbm{H}^N}|u|^p\phi +\int_{\mathbbm{H}^N}u_0\phi(0,\cdot)+\int_0^\infty\int_{\mathbbm{H}^N} f\phi\\
	&= -\int_{S}\int_{\mathbbm{H}^N}u\partial_t\phi+\int_S\int_{\mathbbm{H}^N}u(-\Delta_{\mathbbm{H}^N})^s\phi\\
	&\leq -\int_{S}\int_{\mathbbm{H}^N}|u|\partial_t\phi+\int_S\int_{\mathbbm{H}^N}|u||(-\Delta_{\mathbbm{H}^N})^s\phi|\\
	&=-\int_{S}\int_{B^{\mathbbm{H}^N}_{2\sqrt T}}|u(\xi,t)|(\phi_1(\xi))^l\partial_t(\phi_2(t))^ld\xi dt\\
	&\qquad+\int_S\int_{\mathbbm{H}^N}|u(\xi,t)|(\phi_2(t))^l|(-\Delta_{\mathbbm{H}^N})^s(\phi_1(\xi))^l|d\xi dt\\
	&=-l\int_{S}\int_{B^{\mathbbm{H}^N}_{2\sqrt T}}|u(\xi,t)|(\phi_1(\xi))^l(\phi_2(t))^{l-1}\partial_t\phi_2(t)d\xi dt\\
	&\qquad+\int_S\int_{\mathbbm{H}^N}|u(\xi,t)|(\phi_2(t))^l|(-\Delta_{\mathbbm{H}^N})^s(\phi_1(\xi))^l|d\xi dt.
\end{align}
Let us consider
\begin{align}
	a_1=|u(\xi,t)|\phi_1(\xi)^{\frac{l}{p}}\phi_2(t)^{\frac{l}{p}},\,\,
	b_1=\phi_1(\xi)^{\frac{l}{l-1}}\phi_2(t)^{\frac{1}{l-1}}|\partial_t\phi_2(t)|,
\end{align}
and
\begin{align}
	a_2=|u(\xi,t)|\phi_1(\xi)^{\frac{l}{p}}\phi_2(t)^{\frac{l}{p}},\,\,
	b_2=\phi_1(\xi)^{\frac{1}{l-1}}\phi_2(t)^{\frac{l}{l-1}}|(-\Delta_{\mathbbm{H}^N})^s\phi_1(\xi)|,
\end{align}
where $l$ is given by \eqref{lle}.
Next, for some $\varepsilon>0,$ using the $\varepsilon$-Young inequality, we have 
\begin{align}\label{varyou}
	a_ib_i\leq \varepsilon a_i^p+C(\varepsilon)b_i^{\frac{p}{p-1}}, \text{ for }i=1,2. 
\end{align}
Utilizing \eqref{varyou}, \eqref{km} in \eqref{ka}, we obtain in view of the fact $\phi_1\in C_c^\infty(\mathbbm{H}^N)$ that
\begin{align}
	\int_S\int_{B^{\mathbbm{H}^N}_{2\sqrt T}}|u|^p\phi &+\int_{B^{\mathbbm{H}^N}_{2\sqrt T}}u_0\phi(0,\cdot)+\int_S\int_{B^{\mathbbm{H}^N}_{2\sqrt T}} f\phi\\
	&\leq l\varepsilon\int_{S}\int_{B^{\mathbbm{H}^N}_{2\sqrt T}}|u(\xi,t)|^p\phi_1(\xi)^{l}\phi_2(t)^{l}d\xi dt+lC(\varepsilon)\int_{S}\int_{B^{\mathbbm{H}^N}_{2\sqrt T}}\phi_1(\xi)^l\phi_2(t)|\partial_t\phi_2(t)|^{l-1}d\xi dt\\
	&\qquad+l\varepsilon\int_{S}\int_{B^{\mathbbm{H}^N}_{2\sqrt T}}|u(\xi,t)|^p\phi_1(\xi)^{l}\phi_2(t)^{l}d\xi dt+lC_1C(\varepsilon)\int_S\int_{\mathbbm{H}^N}\phi_2(t)^l\phi_1(\xi)|(-\Delta_{{\mathbbm{H}^N}})^s\phi_1(\xi)|^{l-1}d\xi dt\\
	&= l\varepsilon\int_{S}\int_{B^{\mathbbm{H}^N}_{2\sqrt T}}|u(\xi,t)|^p\phi(\xi,t)d\xi dt+lC(\varepsilon)\int_{S}\int_{B^{\mathbbm{H}^N}_{2\sqrt T}}\phi_1(\xi)^l\phi_2(t)|\partial_t\phi_2(t)|^{l-1}d\xi dt\\
	&\qquad+l\varepsilon\int_{S}\int_{B^{\mathbbm{H}^N}_{2\sqrt T}}|u(\xi,t)|^p\phi(\xi,t)d\xi dt+C_1lC(\varepsilon)\int_S\int_{B^{\mathbbm{H}^N}_{2\sqrt T}}\phi_2(t)^l\phi_1(\xi)|(-\Delta_{\mathbbm{H}^N})^s\phi_1(\xi)|^{l-1}d\xi dt,
\end{align}
for some constant $C_1>0$ independent of $T.$
The above inequality further simplifies to
\begin{align}\label{l1q}
	(1-2l\varepsilon)\int_S\int_{B^{\mathbbm{H}^N}_{2\sqrt T}}|u|^p\phi+\int_{B^{\mathbbm{H}^N}_{2\sqrt T}}&u_0\phi(0,\cdot)+\int_S\int_{B^{\mathbbm{H}^N}_{2\sqrt T}} f\phi\\
	&\qquad\leq lC(\varepsilon)\int_{S}\int_{B^{\mathbbm{H}^N}_{2\sqrt T}}\phi_1(\xi)^l\phi_2(t)|\partial_t\phi_2(t)|^{l-1}d\xi dt\\
	&\qquad\qquad+C_1lC(\varepsilon)\int_S\int_{B^{\mathbbm{H}^N}_{2\sqrt T}}\phi_2(t)^l\phi_1(\xi)|(-\Delta_{\mathbbm{H}^N})^s\phi_1(\xi)|^{l-1}d\xi dt.
\end{align}
Next, using the change of variables $$\xi'=\Phi_{T^{-\frac{1}{2}}}\xi \text{ and } t'=T^{-s}t,$$ in \eqref{l1q} together with the following well-known intrinsic homogeneity of $(-\Delta_{\mathbbm{H}^N})^s:$  
\begin{align}
	(-\Delta_{\mathbbm{H}^N})^s(u(\Phi_\lambda \xi))=\lambda^{2s}(-\Delta_{\mathbbm{H}^N})^su(\Phi_\lambda (\xi)),
\end{align}
deduces
\begin{align}\label{lep}
	\qquad(1-2l\varepsilon)\int_S\int_{B^{\mathbbm{H}^N}_{2\sqrt T}}|u|^p\phi&+\int_{B^{\mathbbm{H}^N}_{2\sqrt T}}u_0\phi(0,\cdot)+\int_S\int_{B^{\mathbbm{H}^N}_{2\sqrt T}} f\phi\\
	&\leq lC(\varepsilon)T^{\frac{Q}{2}}T^{s}T^{-s(l-1)}\int_0^2\int_{B^{\mathbbm{H}^N}_{2}}\phi_1(\xi')^l\phi_2(t')|\partial_t\phi_2(t')|^{l-1}d\xi'dt'\\
	&\qquad+C_1lC(\varepsilon)T^{\frac{Q}{2}}T^{s}T^{-s(l-1)}\int_0^2\int_{B^{\mathbbm{H}^N}_{2}}\phi_2(t')^l\phi_1(\xi')|(-\Delta_{\mathbbm{H}^N})^s\phi_1(\xi')|^{l-1}d\xi'dt'\\
	&=lC(\varepsilon)T^{\frac{Q}{2}+s-\frac{sp}{p-1}}\int_{0}^2\int_{B^{\mathbbm{H}^N}_{2}}\phi_1(\xi')^l\phi_2(t')|\partial_t\phi_2(t')|^{l-1}d\xi'dt'\\
	&\qquad+C_1lC(\varepsilon)T^{\frac{Q}{2}+s-\frac{sp}{p-1}}\int_0^2\int_{B^{\mathbbm{H}^N}_{2}}\phi_2(t')^l\phi_1(\xi')|(-\Delta_{\mathbbm{H}^N})^s\phi_1(\xi')|^{l-1}d\xi'dt'\\
	&\leq lC(\varepsilon)T^{\frac{Q}{2}+s-\frac{sp}{p-1}}\int_{0}^2\int_{B^{\mathbbm{H}^N}_{2}}\phi_1(\xi')^l\phi_2(t')|\partial_t\phi_2(t')|^{(l-1)}d\xi'dt'\\
	&\qquad+lC_2C(\varepsilon)T^{\frac{Q}{2}+s-\frac{sp}{p-1}}\int_0^2\int_{B^{\mathbbm{H}^N}_{2}}\phi_2(t')^l\phi_1(\xi')d\xi'dt',
\end{align}
for some constant $C_2$ independent of $T,$ since $\phi_1\in C_c^\infty(\mathbbm{H}^N).$ Using the facts that $u_0\geq 0$ and $\varepsilon$ is arbitrarily small together with the following relation
\begin{align}\label{s3}
	\int_S\int_{\mathbbm{H}^N}f\phi&=\int_{B^{\mathbbm{H}^N}_{2\sqrt T}}f\phi_1\int_S\phi_2\\
	&\geq 2CT^s\int_{\mathbbm{H}^N}f\phi_1
\end{align}
in the expression \eqref{lep} yields
\begin{align}
	2T^s\int_{\mathbbm{H}^N}f\phi_1&\leq lC(\varepsilon)T^{\frac{Q}{2}+s-\frac{sp}{p-1}}\int_{S}\int_{B^{\mathbbm{H}^N}_{2\sqrt T}}\phi_1(\xi')^l\phi_2(t')|\partial_t\phi_2(t')|^{l-1}d\xi' dt'\\
	&\qquad+C_2lC(\varepsilon)T^{\frac{Q}{2}+s-\frac{sp}{p-1}}\int_S\int_{B^{\mathbbm{H}^N}_{2\sqrt T}}\phi_2(t')^l\phi_1(\xi')d\xi'dt'\\
	&\leq CT^{\frac{Q}{2}+s-\frac{sp}{p-1}},
\end{align}
for some positive constant $C$ independent of $T.$ This simplifies to
\begin{align}\label{s1}
	\int_{\mathbbm{H}^N}f\phi_1&=\int_{B^{\mathbbm{H}^N}_{2\sqrt T}}f\phi_1\\
	&\leq CT^{\frac{Q}{2}+s-\frac{sp}{p-1}}T^{-s}\\
	&=CT^{\frac{Q}{2}-\frac{sp}{p-1}}
\end{align}
Furthermore, observe that 
\begin{align}\label{s2}
	\frac{Q}{2}-\frac{sp}{p-1}
	&=\frac{Q(p-1)-2sp}{2(p-1)}\\
	&=\frac{p(Q-2s)-Q}{2(p-1)}\\
	&< 0,
\end{align}
since $$1\leq p<\frac{Q}{Q-2s}.$$
Finally, in light of \eqref{s2}, taking the limit $T\to\infty$ in \eqref{s1} yields
\begin{align}
	\int_{\mathbbm{H}^N}f\leq 0,
\end{align}
which is a contradiction to our hypothesis that $\int_{\mathbbm{H}^N}f>0.$ This proves our claim.\qed\\

\noindent \textbf{Proof of Theorem \ref{Thm Glo3}.}
We give a proof by the method of contradiction. Let, if possible, $u$ be a global-in-time solution to \eqref{eq 0.1}. We recall from the proof of Theorem \ref{Thm Glo1} that assumption $p<\frac{Q}{Q-2s}$ is used after estimate \eqref{s1}. In view of this, taking $p=\frac{Q}{Q-2s}$ in \eqref{s1} and \eqref{s2} deduces
\begin{align}
	\int_{\mathbbm{H}^N}f\phi_1&=\int_{B^{\mathbbm{H}^N}_{2\sqrt T}}f\phi_1\\
	&\leq C,
\end{align}
where $C$ is the same constant appearing in \eqref{s1}. Further, using the assumption that  
\begin{align}
	f(\xi)\geq \|\xi\|^{\alpha-Q}_{\mathbbm{H}^N} \text{ for } \|\xi\|_{\mathbbm{H}^N}\geq 1 \text{ and }\alpha\in(0,Q),
\end{align}
provides
\begin{align}
	\frac{T^{\frac{\alpha}{2}}}{\alpha}&=\int_{0}^{\sqrt T}r^{\alpha-1}dr\\
	&= \int_{0}^{\sqrt T}r^{\alpha-Q}r^{Q-1}dr\\
	&=C_1\int_{B^{\mathbbm{H}^N}_{\sqrt T}}\|\xi\|_{\mathbbm{H}^N}^{\alpha-Q}d\xi\\
	&=C_1\int_{B^{\mathbbm{H}^N}_{\sqrt T}\setminus B^{\mathbbm{H}^N}_{1}}\|\xi\|_{\mathbbm{H}^N}^{\alpha-Q}d\xi+C_1\int_{B^{\mathbbm{H}^N}_{1}}\|\xi\|_{\mathbbm{H}^N}^{\alpha-Q}d\xi\\
    &\leq C_1\int_{B^{\mathbbm{H}^N}_{\sqrt T}\setminus B^{\mathbbm{H}^N}_{1}}f+C_2\\
	&\leq C,
\end{align}
for some positive constants $C_1,C_2,C$ independent of $T.$ This further implies
\begin{align}
	1&\leq C\alpha T^{-\frac{\alpha}{2}}\\
	&< CQ T^{-\frac{\alpha}{2}},
\end{align}
which gives a contradiction on taking $T\To\infty$ as $C$ is independent of $T$ and $\alpha>0.$ This completes the proof. \qed
\stoptoc
\section{Declarations}


\subsection*{Funding } This research was funded by Nazarbayev University under Collaborative Research Program Grant 20122022CRP1601.

\subsection*{Availability of data and materials}     
Not Applicable.

\subsection*{Conflicts of interests/Competing interests}
There are no conflict of interest of any type.
\resumetoc

\end{document}